\newcommand\Z{\mathbb{Z}}
\newcommand\N{\mathbb{N}}
\newcommand\R{\mathbb{R}}
\newtheorem{lemma}{Lemma}[section]
\newtheorem{proposition}[lemma]{Proposition}
\newtheorem{theorem}[lemma]{Theorem}
\newtheorem{corollary}[lemma]{Corollary}
\newtheorem{maintheorem}{Theorem}
\newtheorem{maincorollary}[maintheorem]{Corollary}
\theoremstyle{definition}
\newtheorem{remark}[lemma]{Remark}
\newtheorem{definition}[lemma]{Definition}
\newcommand\grig{G_{012}}
\newcommand\coarse{coarse}
\begin{document}
\title[Distortion of imbeddings into metric spaces]{Distortion of imbeddings of groups of intermediate growth into metric spaces}

\author{Laurent Bartholdi}
\address{L.B.: Mathematisches Institut, Georg-August Universit\"at, G\"ottingen, Germany}
\email{laurent.bartholdi@gmail.com}

\author{Anna Erschler}
\address{A.E.: C.N.R.S., D\'epartement de Math\'ematiques, Universit\'e Paris Sud, Orsay, France}
\email{anna.erschler@math.u-psud.fr}

\date{June 23, 2014}

\thanks{The work is supported by the ERC starting grant 257110
  ``RaWG'', the ANR ``DiscGroup: facettes des groupes discrets'', the
  Centre International de Math\'ematiques et Informatique, Toulouse,
  and the Institut Henri Poincar\'e, Paris}

\begin{abstract}
  For every metric space $\mathcal X$ in which there exists a sequence
  of finite groups of bounded-size generating set that does not embed
  coarsely, and for every unbounded, increasing function $\rho$, we
  produce a group of subexponential word growth all of whose
  imbeddings in $\mathcal X$ have distortion worse than $\rho$.

  This applies in particular to any B-convex Banach space $\mathcal
  X$, such as Hilbert space.
\end{abstract}
\maketitle

\section{Introduction}
Let $G$ be a finitely generated group, and let $(\mathcal X,d)$ be a
metric space. The extent to which $G$, with its word metric, may be
imbedded in $\mathcal X$ with not-too-distorted metric is an
asymptotic invariant of $G$, introduced by Gromov
in~\cite{gromov:asympt}*{\S7.E}. The general definition of distortion
is as follows:

\begin{definition}
  Consider a $1$-Lipschitz map $\Phi:(\mathcal Y,d)\to(\mathcal X,d)$
  between metric spaces. Its \emph{distortion} is the function
  \[\rho_\Phi\colon\R_+\to\R_+,\qquad\rho_\Phi(t)=\inf_{d(y,y')\ge t}d(\Phi(y),\Phi(y')).\]
  It is the largest increasing function $\rho\colon\R_+\to\R_+$ such
  that
  \[\rho(d(y,y'))\le d(\Phi(y),\Phi(y'))\le d(y,y')\text{ for all }y\neq y'\in
  \mathcal Y.
  \]
  If $\mathcal Y$ has bounded diameter, then $\rho_\Phi(t)=+\infty$
  for all $t>\operatorname{diam}\mathcal Y$.  We say that $\Phi$ has
  distortion \emph{better than $\rho$} if $\rho_\Phi(t)>\rho(t)$ for
  all $t\in\R_+$ large enough, \emph{worse than $\rho$} if
  $\rho_\Phi(t)<\rho(t)$ for all $t\in\R_+$ large enough, and that
  $\Phi$ is a \emph{\coarse\ imbedding} if its distortion is
  unbounded\footnote{An alternative terminology is \emph{uniform
      imbedding}, which we avoid.}.

  More generally, if $(\mathcal Y_i)_{i\in I}$ is a family of metric
  spaces, a \emph{\coarse\ imbedding} is a sequence $(\Phi_i\colon
  \mathcal Y_i\to\mathcal X)$ of $1$-Lipschitz imbeddings with
  $\inf_{i\in I}\rho_{\Phi_i}(t)$ an unbounded function of $t$.
\end{definition}

\noindent Our main result is:

\begin{maintheorem}[= Theorem~\ref{thm:Edistortion}]
  Let $\mathcal X$ be a metric space, and let $(G_i)_{i\in\N}$ be a
  sequence of $d$-generated finite groups that do not imbed coarsely
  in $\mathcal X$.

  Then, for every unbounded increasing function
  $\rho\colon\R_+\to\R_+$, there exists a finitely generated group $W$
  of subexponential growth such that every imbedding of $W$ in
  $\mathcal X$ has distortion worse than $\rho$.

  Furthermore, the group $W$ contains an infinite subsequence of the
  $G_i$'s.
\end{maintheorem}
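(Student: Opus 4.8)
The plan is to realise $W$ as a permutational‑wreath‑product construction over Grigorchuk's group $\grig$, into which a sparse subsequence of the $G_i$ has been inserted, and to deduce the bad distortion of $W$ from a quantitative form of the non‑coarse‑embeddability of the $G_i$ that survives the construction. \emph{First} I would reformulate the hypothesis quantitatively: since the $G_i$ are finite and $d$‑generated, a diagonal argument---together with the freedom, granted by the last clause of the statement, to pass to a subsequence---lets us assume there are a constant $M$ and scales $t_i\to\infty$ such that every $1$‑Lipschitz map $G_i\to\mathcal X$ identifies some pair of points at $G_i$‑distance $\ge t_i$ to within $\mathcal X$‑distance $M$; for the same reason we may assume the rescaled statement that for every $r>0$ every $r$‑Lipschitz map $(G_i,d_{G_i})\to\mathcal X$ collapses some pair at distance $\ge t_i$ to within distance $rM$. (Informally: $(G_i)$ stays a non‑coarsely‑embeddable family after any uniform rescaling of its metric; for sufficiently regular $\mathcal X$ this is a metric Poincaré inequality.)

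\emph{Next}, the construction. Fix a very sparse subsequence $G_{i_1},G_{i_2},\dots$ and build $W$ as an iterated permutational wreath product of the $G_{i_k}$ with copies of $\grig$---schematically a telescope $W=\varinjlim W_k$ in which $W_{k+1}=(G_{i_{k+1}}\times W_k)\wrwr\grig$ is the permutational wreath product over $\grig$ acting on an orbit in the boundary of the rooted tree---arranged to be finitely generated (this is where the uniform bound $d$ on the size of the generating sets is used) and to contain each $G_{i_k}$, which already yields the ``Furthermore'' clause. The reason for wreathing over $\grig$ rather than over a group of exponential growth such as $\Z$ is that each such step inflates the growth only mildly: by the Bartholdi--Erschler estimates for the growth of permutational extensions, a sufficiently lacunary choice of the $i_k$ and of the telescope keeps $W$ of subexponential growth regardless of how fast the orders $\lvert G_i\rvert$ may grow. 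Establishing this growth bound is the first half of the technical work.

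\emph{Finally}, the distortion estimate. Let $\Phi\colon W\to\mathcal X$ be a $1$‑Lipschitz imbedding and fix a large scale $t$. In $W$ the copy of $G_{i_k}$ attached at stage $k$ is replicated over a lamp layer $\bigoplus G_{i_k}$ of the $k$‑th wreathing, and because $\grig$ has subexponential growth a configuration of these lamps spread over many base points is realised by a $W$‑element of comparatively small length; hence---choosing $k$ appropriately in terms of $t$---such configurations exhibit pairs of elements of $W$ at $d_W$‑distance $\ge t$ that differ only through $G_{i_k}$. Restricting $\Phi$ to the layer and averaging over configurations, the quantitative non‑embeddability of $G_{i_k}$ (applied to the maps induced on the individual copies, which are $r$‑Lipschitz with $r$ governed by their position in the tree) forces $\Phi$ to identify one of these pairs to within $\mathcal X$‑distance $\lambda(t)$, where $\lambda$ is an unbounded function that iterating the construction drives upward as slowly as we please. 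Choosing the subsequence $(i_k)$ and the parameters at the outset so that $\lambda(t)<\rho(t)$ for all large $t$, we obtain $\rho_\Phi(t)\le\lambda(t)<\rho(t)$ eventually, i.e.\ $\Phi$ has distortion worse than $\rho$.

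The step I expect to be the main obstacle is this last one, together with its interaction with the construction: beating an \emph{arbitrary} unbounded $\rho$ forces one to amplify the non‑embedding inequality through many stages of wreathing---a ``deep'' construction---whereas subexponential growth strictly limits how much wreathing one can afford, so the two requirements must be balanced delicately. Making the amplification argument work for a \emph{general} metric target $\mathcal X$, using only the metric inequality extracted in the first step rather than any linear or Hilbertian structure, and keeping the Bartholdi--Erschler growth estimate verified throughout, is where the real difficulty lies.
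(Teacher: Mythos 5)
Your proposal captures the broad outline---permutational wreath products over the first Grigorchuk group furnish a finitely generated group of subexponential growth hosting a sparse subsequence of the $G_i$, whose non-embeddability then pollutes every imbedding of $W$---but the actual construction in the paper is structured differently, and several of your steps are either gaps or overcomplications.

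The main structural difference: you propose an iterated telescope $W_{k+1}=(G_{i_{k+1}}\times W_k)\wrwr\grig$, but it is not clear how this stays boundedly generated, nor what the direct limit means (the $W_k$ are not naturally nested). The paper instead forms a \emph{single} wreath product: take $B=\prod'_i H_i$, fix a rectifiable, spreading, locally stabilizing sequence $(x_j)$ in the Schreier graph $X=\mathbf1^\infty\grig$, and let $f\colon X\to B$ place the generators of the $H_i$ along a sparse subsequence of the $x_j$. Then $W=\langle\grig,f\rangle\le B^X\rtimes\grig$ is generated by the Grigorchuk generators plus one more element, and a single application of the subexponential wreathing property (Corollary~\ref{cor:subexp}) gives the growth bound; the convergence machinery of Proposition~\ref{prop:Wconvergence} plays the role of your telescope. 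You also skip the step that makes $W$ actually contain the $G_i$: Lemma~\ref{lem:contains [B,B]} only gives $[B,B]\le W$, not $B$, so the paper first imbeds each $G_i$ as the derived subgroup of a $(d+1)$-generated finite group $H_i$ via Proposition~\ref{prop:imbed[G,G]}, with the ``perfect metric'' controlling distortion. Some substitute for this is required in your plan.

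On the distortion estimate, the argument is considerably simpler than you anticipate: there is no averaging over lamp configurations and no amplification through stages. The key point of Proposition~\ref{prop:W_S} is that the imbedding $\Psi_s\colon G_s\to W$ is $(K_i,L_i)$-bi-Lipschitz with constants depending only on the \emph{previous} choices $H_{s(1)},\dots,H_{s(i-1)}$, so one first fixes $K_i,L_i$, chooses $t_i$ with $\rho(t_i)>L_{i+1}M$, and only then picks $s(i)$ large. This is also why the paper takes $W=W_S\times W_{S'}$ with $S,S'$ interleaved---a device your proposal is missing---so that the stage-$(i+1)$ constants are available before $s(i)$ is chosen. Finally, the rescaling step you flag (``$r$-Lipschitz maps collapse to within $rM$'') is indeed needed---see the appearance of $L_iM$ in the proof of Theorem~\ref{thm:Edistortion}---and for a literally general metric-space target it amounts to reading Definition~\ref{defn:coarseimbed} as asserting this scaled collapse for $L$-Lipschitz test maps; it is automatic when $\mathcal X$ is Banach, which is what feeds Corollary~\ref{cor:distortion}.
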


\begin{maincorollary}[= Corollary~\ref{cor:distortion}]
  For every unbounded increasing function $\rho\colon\R_+\to\R_+$ and
  for every B-convex Banach space $\mathcal X$ (e.g.\ Hilbert space;
  see~\S\ref{ss:superexpanders} for the definition) there exists a
  finitely generated group $W$ of subexponential word growth such
  that every imbedding of $W$ in $\mathcal X$ has distortion worse
  than $\rho$.
\end{maincorollary}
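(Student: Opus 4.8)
The plan is to deduce the Corollary at once from Theorem~\ref{thm:Edistortion}. Concretely, once we know that the given B-convex Banach space $\mathcal X$ admits a sequence of finite groups with a uniform bound $d$ on the number of generators that does \emph{not} imbed coarsely in $\mathcal X$, we apply Theorem~\ref{thm:Edistortion} to that sequence and to the prescribed unbounded increasing function $\rho$. The group $W$ it produces is finitely generated, has subexponential word growth, and has the property that every imbedding of $W$ in $\mathcal X$ has distortion worse than $\rho$, which is exactly the assertion of the Corollary. Thus the only input still to be supplied, beyond Theorem~\ref{thm:Edistortion}, is the auxiliary sequence of finite groups; this is precisely the point at which B-convexity of $\mathcal X$ is used.

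That input comes from the super-expander theory recorded in~\S\ref{ss:superexpanders}. In the model case of a Hilbert space one may take $(G_i)$ to be any family of finite groups whose Cayley graphs, with respect to generating sets of fixed size, form an expander family --- for instance $G_i=\mathrm{SL}_2(\mathbb F_{p_i})$ with the two standard unipotent generators, or the congruence quotients of a Kazhdan group such as $\mathrm{SL}_3(\Z)$: the spectral gap yields a Poincar\'e inequality which forbids $t\mapsto\inf_i\rho_{\Phi_i}(t)$ from being unbounded for any choice of $1$-Lipschitz maps $\Phi_i\colon G_i\to\mathcal X$, so that $(G_i)$ does not imbed coarsely in $\mathcal X$. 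For a general B-convex space --- equivalently, a K-convex space, equivalently a Banach space of nontrivial Rademacher type --- the corresponding object is a sequence of $d$-generated finite groups that is a super-expander relative to $\mathcal X$, which the constructions of Lafforgue and of Mendel--Naor (see~\S\ref{ss:superexpanders}) provide: the point is that the nonlinear Poincar\'e-type inequality attached to the type of $\mathcal X$ obstructs every coarse imbedding of $(G_i)$ into $\mathcal X$. Depending on the construction used, one single sequence $(G_i)$ may even serve for all B-convex targets simultaneously, but Theorem~\ref{thm:Edistortion} only requires one sequence tailored to the space $\mathcal X$ at hand.

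With such a $(G_i)$ in place the proof is finished by the application of Theorem~\ref{thm:Edistortion} described in the first paragraph. The one thing that genuinely has to be checked here is that the super-expander can be realised by a sequence of \emph{groups} carrying uniformly bounded generating sets --- not merely by a sequence of bounded-degree graphs --- which is part of what is set up in~\S\ref{ss:superexpanders}. Beyond that verification the Corollary has no content of its own: I expect the real obstacle to be entirely internal to Theorem~\ref{thm:Edistortion}, namely the construction of a finitely generated group of subexponential growth containing a sufficiently sparse subsequence of the $G_i$, with the word metric on those copies controlled finely enough in terms of $\rho$ that any imbedding of $W$ in $\mathcal X$ is forced to imbed the subsequence coarsely --- the tension between the sparsity needed to keep the growth subexponential and the control needed to beat $\rho$ being the crux of the matter.
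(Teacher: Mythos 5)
Your proposal follows essentially the same route as the paper: take the superexpander sequence of $d$-generated finite groups furnished by Lafforgue (recorded as Proposition~\ref{prop:lafforgue}), note that the nonlinear Poincar\'e inequality gives the strong non-coarse-imbeddability of Definition~\ref{defn:coarseimbed}, and feed the sequence into Theorem~\ref{thm:Edistortion}. The paper's proof adds only the small remark that one can skip the extra derived-subgroup step of Proposition~\ref{prop:imbed[G,G]} because Lafforgue's $H_i$ are already perfect (Lemma~\ref{lem:H1H2H3}); otherwise the two arguments coincide, and your concluding assessment that the real content lives in Theorem~\ref{thm:Edistortion} is exactly right.
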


Furthermore, the group $W$ depends on $\rho$ but only mildly on
$\mathcal X$: for every unbounded increasing function $\rho$ there
exists a group $W$ of subexponential growth, such that every imbedding
of $W$ in a B-convex Banach space $\mathcal X$ has distortion worse
than $c\rho$ for some constant $c$; and moreover $c$ depends only on
the convexity parameters $n,\epsilon$ of $\mathcal X$, see
Remark~\ref{rem:switchquantifiers}.

Recall that a finitely generated group $G$ has \emph{subexponential
  growth} if, for every $\lambda>1$, the number of group elements that
are products of at most $n$ generators grows more slowly than the
exponential function $\lambda^n$. A group has \emph{locally
  subexponential growth} if all its finitely generated subgroups have
subexponential growth.

Arzhantseva, Dru\c tu and Sapir construct
in~\cite{arzhantseva-drutu-sapir:compression}*{Theorem~1.5}, for every
unbounded increasing function $\rho$, a group which imbeds \coarse ly
into Hilbert space, and such that all of its imbeddings into a
uniformly convex Banach space have distortion worse than $\rho$ (for a
slightly weaker definition of ``worse'' than ours).  Olshansky and
Osin construct, moreover, amenable groups with the same property
in~\cite{olshanskii-osin:qiembedding}*{Corollary~1.4}.

These examples all have exponential growth. In contrast, the main
point of our construction is to produce such groups having
subexponential growth. These are in particular the first known
examples of groups whose simple random walks have trivial Poisson
boundary and with arbitrarily bad distortion in every imbedding into
Hilbert space.

It follows from~\cite{naor-peres:embeddings}*{Theorem~1.1} by Naor and
Peres that, if an amenable group $G$ admits an imbedding with
distortion better than $n^{1/2+\epsilon}$ for some $\epsilon>0$, then
every simple random walk on $G$ has trivial Poisson boundary. Our
result shows that groups with trivial Poisson boundary for every
simple random walk may have arbitrarily bad distortion in every
imbedding into Hilbert space.

Note that groups of subexponential growth are amenable, and Bekka,
Ch\'erix and Valette show in~\cite{bekka-cherix-valette:affineactions}
that amenable groups imbed \coarse ly into Hilbert space. In fact,
their imbeddings can be shown to have distortion better than an
unbounded function which depends only on the F\o lner function.
Tessera~\cite{tessera:uniform}*{Theorem~10} gives such formul\ae\
in terms of the isoperimetric profile inside balls.

For example, consider the Grigorchuk groups $G_\omega$ of intermediate
growth, introduced in~\cite{grigorchuk:gdegree}. They admit
``self-similar random walks'' $\mu_\omega$ in the sense
of~\cite{bartholdi-erschler:boundarygrowth}*{\S6}
and~\cite{kaimanovich:munchhausen}, with asymptotic entropy
$H(\mu_\omega^n)\precsim n^{1/2}$
by~\cite{bartholdi-erschler:boundarygrowth}*{Corollary~6.3}; so their
probability of return satisfies $\mu_\omega^n(1)\succsim
\exp(-n^{1/2})$ using the general estimate
$\mu^n(1)\ge\exp(-2H(\mu^n))$, their F\o lner function satisfies
$F(n)\precsim \exp(n^2)$ using Nash inequalities
(see~\cite{woess:rw}*{Corollary~14.5(b)}, and therefore they admit
imbeddings into Hilbert space of distortion better than
$t^{1/2-\epsilon}$ for every $\epsilon>0$, by a result of
Gournay~\cite{gournay:liouville}.

On the other hand, groups of subexponential growth can have
arbitrarily large F\o lner function~\cite{erschler:piecewise}. Our
result can therefore be seen as a strengthening of this fact.

\subsection{Acknowledgments}
We are grateful to Bogdan Nica for helpful discussions on generating
sets for linear groups, and to Mikael de la Salle for explanations on
Lafforgue's construction of expanders and for
Remark~\ref{rem:switchquantifiers}.

\section{Sketch of the proof}
We apply the construction of~\cite{bartholdi-erschler:imbeddings},
which constructs, for any countable group $B$ all of whose finitely
generated subgroups have subexponential growth, a finitely generated
group of subexponential growth in which $B$ imbeds;
see~\S\ref{ss:permutational} and~\S\ref{ss:W}. We take for $B$ a
restricted direct product of finite groups $H_i$ with poor imbedding
properties, following the idea of Arzhantseva, Dru\c tu and
Sapir~\cite{arzhantseva-drutu-sapir:compression}.

We proceed in three steps: first, given a sequence of finite groups
$G_1,G_2,\dots$ with bounded number of generators, we imbed each each
$G_i$ as the derived subgroup of a finite group $H_i$ with one more
generator, in such a manner that the metric on $G_i$ is at universally
bounded distance from a particular metric on $[H_i,H_i]$, the
\emph{perfect metric} (see~\S\ref{ss:derived}).

We then show in~\S\ref{ss:inW} that an arbitrary subset of the $H_i$'s
imbeds in a finitely generated group $W$ of subexponential growth with
controlled distortion; more precisely, the distortion constants of an
imbedded $H_i$ in $W$ depends only on the previous $H_j$'s, but not on
$H_i$.

Finally, in~\S\ref{ss:inMS} we assume that the $H_i$ do not imbed
\coarse ly in a metric space $\mathcal X$. Given any unbounded
increasing function $\rho$, we select the subset of $H_i$'s
appropriately so that the distortion of $W$ in $\mathcal X$ is worse
than $\rho$.

To prove the corollary in~\S\ref{ss:superexpanders}, we choose for the
$G_i$, or even directly for the $H_i$, a family of superexpanders such
as those constructed by Lafforgue in~\cite{lafforgue:Tbanachique}.

\section{Permutational wreath products}\label{ss:permutational}
We recall briefly some notions introduced
in~\cite{bartholdi-erschler:imbeddings}. For details, we refer to that
article.

Let $G=\langle S\rangle$ be a finitely generated group acting on the
right on a set $X$. We consider $X$ as a the vertex set of a graph
still denoted $X$, with for all $x\in X,s\in S$ an edge labelled $s$
from $x$ to $xs$. We denote by $d$ the path metric on this graph.

\begin{definition}
  A sequence $(x_0,x_1,\dots)$ in $X$ is \emph{spreading} if for all
  $R$ there exists $N$ such that if $i,j\ge N$ and $i\neq j$ then
  $d(x_i,x_j)\ge R$.
\end{definition}

\begin{definition}
  A sequence $(x_i)$ in $X$ \emph{locally stabilises} if for all $R$
  there exists $N$ such that if $i,j\ge N$ then the $S$-labelled
  radius-$R$ balls centered at $x_i$ and $x_j$ in $X$ are equal.
\end{definition}

\begin{definition}
  A sequence of points $(x_i)$ in $X$ is \emph{rectifiable} if for all
  $i,j$ there exists $g \in G$ with $x_i g =x_j$ and $x_k g\ne x_\ell$
  for all $k\notin\{i,\ell\}$.
\end{definition}

\begin{definition}
  The group $G$ acting on $X$ has the \emph{subexponential wreathing
    property} if for any finitely generated group of subexponential
  growth $H$ the restricted wreath product $H\wr_X G$ has
  subexponential growth.
\end{definition}

We summarize, in the following proposition, an example of group $G$,
action on $X$ and rectifiable, separating, locally stabilizing
sequence $(x_i)$ with the subexponential wreathing property:

\begin{proposition}[\cite{bartholdi-erschler:imbeddings}*{Lemma~4.9}
  and \cite{bartholdi-erschler:permutational}*{Proposition~4.4}]\label{prop:grig}
  Let $G=\grig=\langle a,b,c,d\rangle$ denote the first Grigorchuk
  group. Recall that it acts on set of infinite sequences
  $\{\mathbf0,\mathbf1\}^\infty$ over a two-letter alphabet, which is
  naturally the boundary of a binary rooted tree. Denote by
  $X=\mathbf1^\infty\grig$ the orbit of the rightmost ray.

  Then $G$ has the subexponential wreathing property in its action on
  $X$, and the sequence $(x_i)$ defined by
  $x_i=\mathbf0^i\mathbf1^\infty$ is rectifiable, spreading and
  locally stabilizing.\qed
\end{proposition}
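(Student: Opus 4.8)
The plan is to invoke the two cited results, which between them record, for $G=\grig$ in its action on $X=\mathbf1^\infty\grig$, both the subexponential wreathing property and the fact that the sequence $x_i=\mathbf0^i\mathbf1^\infty$ is rectifiable, spreading and locally stabilising. For orientation I indicate the mechanisms and flag where the real work lies: the geometric properties of $(x_i)$ are soft consequences of the self-similar, contracting structure of $\grig$, whereas the subexponential wreathing property rests on a delicate growth estimate.

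The geometric properties all flow from the classical contraction of $\grig$ (see~\cite{grigorchuk:gdegree}): there is a constant $c$ such that a word of length $\le R$ in $\{a,b,c,d\}$ has trivial sections below level $c\log_2 R$, hence acts on any boundary ray by permuting its top $c\log_2 R$ letters and fixing the tail. For \emph{spreading}, note that $x_i$ and $x_j$ first differ in coordinate $\min\{i,j\}+1$; if $\min\{i,j\}>c\log_2 R$ no word of length $\le R$ can carry $x_i$ to $x_j$, so $d(x_i,x_j)\ge 2^{\min\{i,j\}/c}$, which tends to $\infty$. For \emph{local stabilisation}, the $S$-labelled $R$-ball of $X$ at $x_i$ is determined by the level-$(c\log_2 R)$ Schreier graph of $\grig$ around the vertex that is the length-$(c\log_2 R)$ prefix of $x_i$; for $i\ge c\log_2 R$ that prefix is $\mathbf0^{c\log_2 R}$, independently of $i$, so the balls agree. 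For \emph{rectifiability}, one produces $g$ inside the rigid stabiliser $\operatorname{Rist}_{\grig}(\mathbf0^{\min\{i,j\}})$: since $\grig$ is a branch group this stabiliser is large, it carries $x_i$ to $x_j$, and one can pre- or post-compose it with an element supported still deeper along the $\mathbf0^\infty$ ray to destroy any spurious coincidence $x_k g=x_\ell$ without disturbing $x_i\mapsto x_j$ or the points $x_k$ with small $k$ (which lie outside the supporting subtree and are therefore fixed).

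The substantial point is the \emph{subexponential wreathing property}, and here I would simply quote~\cite{bartholdi-erschler:permutational}*{Proposition~4.4}; its proof is the technical heart. An element of $H\wr_X\grig$ of word length $n$ is described by its $\grig$-component $\bar w$, of length at most $n$, together with elements of $H$ deposited at the sites of $X$ visited by the inverted orbit of $\bar w$; iterating the self-similar embedding $\grig\hookrightarrow(\grig\times\grig)\rtimes(\Z/2)$ splits both $\bar w$ and the attached budget of $H$-letters between two subtrees of roughly half the size, and a careful accounting of the inverted-orbit sizes produced along the way turns ``$\beta_H$ subexponential and $\beta_{\grig}(n)\precsim\exp(n^\alpha)$ for some $\alpha<1$'' (the latter again from~\cite{grigorchuk:gdegree}) into ``$\beta_{H\wr_X\grig}$ subexponential''. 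This recursive estimate is the main obstacle; everything else in the statement is bookkeeping around the contraction of $\grig$.
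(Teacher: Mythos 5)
The paper gives no proof of this proposition: the statement is followed by a tombstone symbol and defers entirely to the two cited references, which is exactly the route you take. Your expository sketch of the contraction argument for spreading, local stabilisation and rectifiability, and of the inverted-orbit recursion behind the subexponential wreathing property, is an accurate outline of what those references establish, but all the paper itself requires here is the citation.
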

  
\section{Imbedding groups of locally subexponential growth in $W$}\label{ss:W}
We assume that a group $G$ acting on a set $X$, and a separating,
spreading, locally stabilizing sequence $(x_i)$ of elements of $X$
have been fixed; such data exist by Proposition~\ref{prop:grig}. We
repeat, in this Section, the contents
of~\cite{bartholdi-erschler:imbeddings}*{\S6}, since they are
fundamental to the argument.

Let $B$ be a group, and let $(b_1,b_2,\dots)$ be a sequence in $B$. We
will construct a rapidly increasing sequence $0\le n(1) < n(2) <\dots$
later; assuming this sequence given, we define $f\colon X\to B$ by
\[f(x_{n(1)})=b_1,\qquad f(x_{n(2)})=b_2,\qquad \dots,\qquad f(x)=1\text{ for other }x.
\]
We then consider the subgroup $W=\langle G,f\rangle$ of the
unrestricted wreath product $B^X\rtimes G$.

\begin{lemma}[\cite{bartholdi-erschler:imbeddings}*{Lemma~6.1}]\label{lem:contains [B,B]}
  Denote by $B_0$ the subgroup of $B$ generated by
  $\{b_1,b_2,\dots\}$. If the sequence $(x_i)$ is separating, then $W$
  contains $[B_0,B_0]$ as a subgroup.
\end{lemma}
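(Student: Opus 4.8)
The plan is to work inside the unrestricted wreath product $B^X\rtimes G$ and exhibit, for each pair of indices $p,q$, an element of $W$ that acts as the commutator $[b_p,b_q]$ in the coordinate at $x_{n(p)}$ (say) and trivially in all other coordinates; since such commutators generate $[B_0,B_0]$, this proves the claim. The only tool available is that $W$ is generated by $G$ and the single function $f$, so every element of $W\cap B^X$ must be assembled as a product of $G$-conjugates of $f^{\pm1}$; the separating hypothesis on $(x_i)$ is exactly what lets us move the support of $f$ around without collisions.

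First I would recall that for $g\in G$ the conjugate $f^g\in B^X$ is the function $x\mapsto f(xg^{-1})$, whose support is $\{x_{n(i)}g:i\ge1\}$. Fix $p\ne q$ and consider the two distinguished points $x_{n(p)}$ and $x_{n(q)}$. Using that $(x_i)$ is separating — after possibly passing to the tail of the sequence via a large enough choice of $n(\cdot)$ — one arranges that the orbit points $x_{n(p)},x_{n(q)}$ are far apart, and more importantly one wants a $g\in G$ that sends $x_{n(q)}$ to $x_{n(p)}$ while keeping all other marked points $x_{n(i)}$ ($i\notin\{p,q\}$) away from $x_{n(p)}$ and from their own images. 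This is a rectifiability-type condition; it is supplied by the hypotheses we are allowed to assume (Proposition~\ref{prop:grig} gives a rectifiable sequence), and it is the step I expect to be the main obstacle, since one must simultaneously control finitely many marked points and their images under a single group element — the bookkeeping of which coordinates of $B^X$ get perturbed is the delicate part.

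Granting such a $g$, the element $f^g$ has value $b_q$ at $x_{n(p)}$ (because $x_{n(p)}g^{-1}=x_{n(q)}$ and $f(x_{n(q)})=b_q$), value $b_p$ possibly at some point, and value $1$ at $x_{n(p)}$ coming from $f$ itself only if we are careful; the right object to form is the commutator $[f,f^g]\in B^X\rtimes G$. One checks that $[f,f^g]$ lies in $B^X$ (its $G$-component is $[1,1]=1$) and that its value at a point $x$ is $[f(x),f(xg^{-1})]$, up to conjugation bookkeeping in the wreath product; by the choice of $g$ this is $[b_p,b_q]$ at $x_{n(p)}$ and trivial at every other point, because at any other coordinate at least one of the two entries is $1$. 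Thus $[b_p,b_q]$, viewed in the $x_{n(p)}$-coordinate copy of $B$, lies in $W$. Finally, letting $p,q$ range and noting that $[B_0,B_0]$ is generated by all such commutators (and that the identification of the $x_{n(p)}$-coordinate copy of $B$ inside $B^X$ with $B$ itself is compatible as $p$ varies, again using separation to keep the copies disjoint), we conclude that $W\supseteq[B_0,B_0]$.
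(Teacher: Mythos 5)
There is a genuine gap. Your plan rests on the claim that the elementary commutators $[b_p,b_q]$ \emph{generate} $[B_0,B_0]$, which you state twice ("since such commutators generate $[B_0,B_0]$, this proves the claim"). This is false for a general group: the derived subgroup of $\langle b_1,b_2,\dots\rangle$ is the \emph{normal closure} of the set $\{[b_p,b_q]\}$ in $B_0$, not the subgroup it generates. (Already in $F_2=\langle a,b\rangle$, the element $[a,b]$ generates an infinite cyclic group, not $[F_2,F_2]$.) So exhibiting the elements $[b_p,b_q]$ inside $W$ does not by itself give you $[B_0,B_0]\le W$. The paper closes exactly this gap with a second step: setting $H=\iota(B)\cap W$, it shows that $H$ is normalized by each $\iota(b_i)$ — the trick being that conjugating an element of $\iota(B)$ by $\iota(b_i)$ has the same effect as conjugating it by $f^{g_i}\in W$, because $f^{g_i}$ agrees with $\iota(b_i)$ at the base point and the element being conjugated is supported only there. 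Once $H$ is normal in $\iota(B)$ and contains the elementary commutators, it contains their normal closure, i.e.\ $\iota([B,B])$.

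A secondary issue: you produce $[b_p,b_q]$ sitting in the coordinate $x_{n(p)}$, which moves with $p$. To assemble these into a single imbedded copy of $[B_0,B_0]$ you must bring them all to one reference coordinate; products of elements supported at different marked points do not live in a single copy of $B$. The paper handles this by choosing $g_i,g_j$ that transport $x_{n(i)},x_{n(j)}$ to a common basepoint $x_0$ and forming $[f^{g_i},f^{g_j}]$, so that everything lands in the fixed copy $\iota(B)$. Your phrase "the identification \dots is compatible as $p$ varies" does not substitute for this. The commutator computation itself (that $[f,f^g]$, or $[f^{g_i},f^{g_j}]$, is supported at a single point with the expected value, thanks to the no-collision choice of group elements) is fine.
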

\begin{proof}
  Without loss of generality and to lighten notation, we rename $B_0$
  into $B$. We also denote by $\iota\colon B\to B^X\rtimes G$ the
  imbedding of $B$ mapping the element $b\in B$ to the function $X\to
  B$ with value $b$ at $x_0$ and $1$ elsewhere.  We shall show that
  $W$ contains $\iota([B,B])$. For this, denote by $H$ the subgroup
  $\iota(B)\cap W$.

  We first consider an elementary commutator $g=[b_i,b_j]$. Let
  $g_i,g_j\in G$ respectively map $x_i,x_j$ to $x_0$, and be such that
  $g_i g_j^{-1}$ maps no $x_k$ to $x_\ell$ with $k\neq\ell$, except
  for $x_i g_i g_j^{-1}=x_j$. Consider $[f^{g_i},f^{g_j}]\in W$; it
  belongs to $B^X$, and has value $[b_i,b_j]$ at $x_0$ and is trivial
  elsewhere, so equals $\iota(g)$ and therefore $\iota(g)\in H$.

  We next show that $H$ is normal in $B^X$. For this, consider $h\in
  H$. It suffices to show that $h^{\iota(b_i)}$ belongs to $H$ for all
  $i$. Now $h^{\iota(b_i)}=h^{f^{g_i}}$ belongs to $H$, and we are
  done.
\end{proof}

\begin{proposition}[\cite{bartholdi-erschler:imbeddings}*{Proposition~6.2}]\label{prop:Wconvergence}
  Let $G$ be a group acting on $X$. Let the sequence $(x_i)$ in $X$ be
  spreading and locally stabilizing. Let a sequence of elements
  $(b_i)$ be given in the group $B$, all of the same order
  $\in\N\cup\{\infty\}$.

  For all $i\in\N$, let $f_i$ be the finitely supported function $X\to
  B$ with $f_i(x_{n(j)})=b_j$ for all $j\le i$, all other values
  being trivial, and denote by $W_i$ the group $\langle f_i,G\rangle$.

  Then for every increasing sequence $(m(i))$ there is a choice of
  $(n(i))$ such that the ball of radius $m(i)$ in $W$ coincides with
  the ball of radius $m(i)$ in $W_i$, via the identification
  $f\leftrightarrow f_i$.

  Furthermore, the term $n(i)$ depends only on $m(i)$ and on the ball
  of radius $m(i)$ in $\langle b_1,\dots,b_{i-1}\rangle$.
\end{proposition}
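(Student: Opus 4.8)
The plan is to build the sequence $(n(i))$ inductively, one term at a time, so that the $i$-th stage of the construction stabilises the ball of radius $m(i)$. I would argue that $W$ is, as a group, a directed union of the $W_i$: as $i\to\infty$ the functions $f_i$ converge pointwise to $f$, and since $G$ is fixed, elements of $W$ are words in $f^{\pm1}$ and generators of $G$; any fixed such word uses finitely many conjugates $f^{g}$, and evaluating it only sees the values of $f$ on the finitely many points $x_{n(j)}$ in a bounded region. So it suffices to show that for a word $w$ of length $\le m(i)$, its value in $W$ as an element of $B^X\rtimes G$ is unchanged if we replace $f$ by $f_i$ — equivalently, that the extra ``tail'' support $\{x_{n(j)}:j>i\}$ is pushed far enough away that no word of length $\le m(i)$ can reach it from the relevant region, and that the ``early'' part is already correct by induction.

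Concretely, here is the order of steps. First I would fix $i$, assume $n(1),\dots,n(i-1)$ already chosen, and note that the ball of radius $m(i)$ in $W_{i-1}$ (hence, by the inductive hypothesis, in $W$ restricted to the first $i-1$ generators) is a finite set $F$ of elements of $B^X\rtimes G$; its description depends only on $m(i)$ and the ball of radius $m(i)$ in $\langle b_1,\dots,b_{i-1}\rangle$, because that is all the ``letter data'' a word of that length can produce in the $B$-coordinates. Second, I use that $(x_i)$ is spreading to pick $R$ large enough and then $n(i)$ large enough that $x_{n(i)}$ (and all later points, which we'll choose even further out) lies at distance $>2m(i)$ in $X$ from every point in the (finite) union of $G$-translates $x_0 g$, $g$ ranging over the length-$\le m(i)$ prefixes that occur. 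Third, I use local stabilisation to guarantee that the $S$-labelled $m(i)$-ball of $X$ around $x_{n(i)}$ agrees with that around the earlier base points, so that conjugating $f$ by the relevant $g\in G$ behaves combinatorially the same way near $x_{n(i)}$ as it did in the earlier stages — this is what makes the identification $f\leftrightarrow f_i$ a genuine isomorphism of the two radius-$m(i)$ balls rather than merely a bijection.

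With those choices, a word $w$ in the generators of $W$ of length $\le m(i)$ acts on $X$ (via its image in $G$) moving points by at most $m(i)$, so when we expand $w$ as a product in $B^X\rtimes G$ the $B^X$-component is supported within distance $m(i)$ of $\{x_0 g\}$; since $x_{n(i)}$ and all later support points of $f$ were placed at distance $>2m(i)$ from there, replacing $f$ by $f_i$ changes none of the coordinates $w$ actually touches, so $w$ has the same value in $W$ and in $W_i$. Running the same bound in reverse shows conversely that a length-$\le m(i)$ word in $W_i$ has the same value in $W$. Hence the radius-$m(i)$ balls coincide. The bookkeeping showing the value of $w$ in the $B$-coordinates only involves $[B_0,B_0]$-type words built from $b_1,\dots,b_{i-1}$ (because $b_i$ is only ``switched on'' at $x_{n(i)}$, which $w$ cannot reach) gives the ``furthermore'' clause: $n(i)$ depends only on $m(i)$ and on the radius-$m(i)$ ball in $\langle b_1,\dots,b_{i-1}\rangle$.

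The main obstacle I anticipate is not the distance estimate — that is a direct application of the spreading property — but making precise the claim that a bounded-length word in $W$ ``only sees'' a bounded region and bounded letter-data, and that local stabilisation makes the two balls isometric rather than merely equinumerous. One has to be careful that conjugating $f$ by an element $g\in G$ of large word length can move the support of $f$ far away, so the correct statement is about words of length $\le m(i)$ in the generators $\{f\}\cup S$ of $W$ (not about $f^g$ for arbitrary $g$); tracking how far such a word can displace $x_0$, and simultaneously that the local picture of $X$ near the far-out points is a translate of the local picture near $x_0$, is the delicate point. This is exactly the place where one must invoke both hypotheses on $(x_i)$ in tandem, and it is essentially the content of the cited \cite{bartholdi-erschler:imbeddings}*{Proposition~6.2}, so I would organise the write-up to isolate that combinatorial lemma and then feed it the quantitative choice of $n(i)$.
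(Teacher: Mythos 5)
There is a genuine gap, and it comes from a misreading of what the generator $f$ actually is. You write that ``evaluating [a short word] only sees the values of $f$ on the finitely many points $x_{n(j)}$ in a bounded region'', and later that ``replacing $f$ by $f_i$ changes none of the coordinates $w$ actually touches, so $w$ has the same value in $W$ and in $W_i$.'' This is false even for the length-one word $w=f$: by construction $f$ is the function with $f(x_{n(j)})=b_j$ for \emph{all} $j$, so its support is infinite and unbounded in $X$, and it differs from $f_i$ at every $x_{n(j)}$ with $j>i$. Consequently the radius-$m(i)$ balls in $W$ and $W_i$ are \emph{not} equal as subsets of $B^X\rtimes G$; the proposition only asserts that they are in canonical bijection under $f\leftrightarrow f_i$. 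The distance estimate you emphasize — that the tail points are far from ``the relevant region near $x_0$'' — does not help, because the $B^X$-component of any word involving $f$ automatically has support in a ball of radius $m(i)-1$ around \emph{every} $x_{n(j)}$, including the tail ones. So the tail is always ``reached''; the question is not whether it is reached, but whether what happens there carries independent information.

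The missing idea is the one the paper actually uses: local stabilization forces the tail to be a \emph{copy} of the head, not to be invisible. Choose $n(i)$ so that the $x_j$ with $j\ge n(i)$ are mutually at distance $\ge m(i)$ (spreading), and so that the labelled radius-$m(i)$ balls around $x_{n(i)}$ and around every $x_j$ with $j>n(i)$ coincide (local stabilization). Then for an element $h=(c,g)$ of $W$ of length $\le m(i)$, the support of $c$ splits into disjoint pieces around the $x_{n(k)}$; the pieces around $x_{n(j)}$ for $j>i$ are determined by the piece around $x_{n(i)}$ via the isomorphism $\langle b_i\rangle\to\langle b_j\rangle$ (this is where the hypothesis that the $b_j$ all have the same order is used, a hypothesis your argument never invokes). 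Hence $h$ is recovered from its truncation in $W_i$, which gives the bijection of balls. Your write-up replaces this copy-paste mechanism with a separation-of-supports argument that cannot work, because the supports are not separated from the word's reach; and it also never uses the equal-order hypothesis, which is a signal that the route is wrong.
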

\begin{proof}
  Choose $n(i)$ such that $d(x_j,x_k)\ge m(i)$ for all $j\neq k$ with
  $k\ge n(i)$, and such that the balls of radius $m(i)$ around
  $x_{n(i)}$ and $x_j$ coincide for all $j>n(i)$.

  Consider then an element $h\in W$ in the ball of radius $m(i)$, and
  write it in the form $h=(c,g)$ with $c\colon X\to B$ and $g\in
  G$. The function $c$ is a product of conjugates of $f$ by words of
  length $<R$. Its support is therefore contained in the union of
  balls of radius $m(i)-1$ around the $x_j$, with $j$ either $\ge
  n(i)$ or of the form $n(k)$ for $k<i$. In particular, the entries of
  $c$ are in $\langle b_1,\dots,b_{i-1}\rangle\cup\bigcup_{j\ge
    i}\langle b_j\rangle$. For $j>n(i)$, the restriction of $c$ to the
  ball around $x_j$ is determined by the restriction of $c$ to the
  ball around $x_{n(i)}$, via the identification $b_i\mapsto b_j$,
  because the neighbourhoods in $X$ coincide and all cyclic groups
  $\langle b_j\rangle$ are isomorphic.

  It follows that the element $h\in W$ is uniquely determined by the
  corresponding element in $W_i$.
\end{proof}

\begin{corollary}[\cite{bartholdi-erschler:imbeddings}*{Corollary~6.3}]\label{cor:subexp}
  Let $G$ be a group acting on $X$ with the subexponential wreathing
  property. Let the sequence $(x_i)$ be spreading and locally
  stabilizing.

  If $B$ has locally subexponential growth, then $W$ has
  subexponential growth.
\end{corollary}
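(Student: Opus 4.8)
Unlike the finitely generated groups $W_i=\langle f_i,G\rangle$ of Proposition~\ref{prop:Wconvergence}, the group $W$ need not sit inside a \emph{restricted} wreath product, so the subexponential wreathing property cannot be applied to $W$ directly. The plan is to first show that every $W_i$ has subexponential growth, and then to transfer this to $W$ by a diagonal argument built on Proposition~\ref{prop:Wconvergence}.

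For the first point, set $B_i:=\langle b_1,\dots,b_i\rangle$; this group is finitely generated and hence, since $B$ has locally subexponential growth, has subexponential growth. The function $f_i$ and all its $G$-translates are finitely supported with values in $B_i$, so $W_i$ is a finitely generated subgroup of the restricted wreath product $B_i\wr_X G$. By hypothesis the action of $G$ on $X$ has the subexponential wreathing property, so $B_i\wr_X G$ — and therefore its finitely generated subgroup $W_i$ — has subexponential growth.

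For the second point, recall from Proposition~\ref{prop:Wconvergence} that a choice of the increasing sequence $(m(i))$ determines $(n(i))$, hence $f$, hence $W$, so that the ball of radius $r$ in $W$ agrees with the ball of radius $r$ in $W_i$ whenever $r\le m(i)$. Fix a sequence $\lambda_i\downarrow1$. I would build $(m(i))$ recursively: having chosen $m(1)<\dots<m(i-1)$, the group $W_i$ is already pinned down except for the location $x_{n(i)}$ of its last generator, which depends on $m(i)$ and recedes to infinity as $m(i)\to\infty$; by spreading and local stabilisation — exactly as in the proof of Proposition~\ref{prop:Wconvergence} — $W_i$ then converges in the space of marked groups, so that the least $R$ with $|B_{W_i}(r)|\le\lambda_i^{\,r}$ for all $r\ge R$ stays bounded as $m(i)$ grows. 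I would therefore choose $m(i)>m(i-1)$ large enough that this $R$ is at most $m(i-1)+1$, running the recursion so that each $m(j)$ dwarfs every quantity already determined by $m(1),\dots,m(j-1)$. With such a choice, every integer $r>m(1)$ lies in a unique interval $(m(i-1),m(i)]$, on which $|B_W(r)|=|B_{W_i}(r)|\le\lambda_i^{\,r}$; since $i\to\infty$ as $r\to\infty$ and $\lambda_i\to1$, this yields $|B_W(r)|\le\lambda^r$ for every $\lambda>1$ and all large $r$, i.e.\ subexponential growth of $W$.

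The genuinely delicate step is the recursion just described: the threshold $R$ measuring subexponentiality of $W_i$ depends on $m(i)$, the very number being selected, so one must first establish that $R$ stabilises as the newest bump is pushed off to infinity — equivalently, that $W_i$ admits a marked-group limit along $m(i)\to\infty$ — before the recursive choice is even well-posed. This stabilisation is not a new phenomenon but a bump-by-bump reuse of the convergence mechanism already underlying Proposition~\ref{prop:Wconvergence}.
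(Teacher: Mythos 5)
Your high-level plan — observe that each $W_i$ sits inside the restricted wreath product $B_i\wr_XG$ hence is subexponential, then use Proposition~\ref{prop:Wconvergence} to transfer subexponentiality to $W$ by a diagonal argument — is the same as the paper's, but there are two real differences in the details.

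First, you invoke Proposition~\ref{prop:Wconvergence} directly, but that proposition hypothesises that all the $b_i$ have the \emph{same} order, which is not assumed here. The paper's proof opens with the necessary preprocessing step: replace $B$ by $B\times Z$ for a cyclic group $Z=\langle z\rangle$ whose order is divisible by every $\operatorname{ord}(b_i)$, and replace each $b_i$ by $b_iz$; this preserves locally subexponential growth and makes all the generators have uniform order. Omitting this step leaves the appeal to Proposition~\ref{prop:Wconvergence} unjustified.

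Second, and more substantially, the mechanism by which you pass from control on the $W_i$ to control on $W$ is different, and your version requires an extra claim you do not really establish. You partition $\R_+$ into intervals $(m(i-1),m(i)]$ and demand that $|B_{W_i}(r)|\le\lambda_i^r$ on the \emph{whole} interval, which forces you to bound the subexponentiality threshold $R_i$ of $W_i$ by $m(i-1)+1$. Since $W_i$ depends on $m(i)$, you appeal to a marked-group limit of $W_i$ as $m(i)\to\infty$ to claim that $R_i$ stabilises; but Cayley convergence only says the balls of each fixed radius eventually freeze, which is strictly weaker than uniform control of the growth thresholds of the approximating groups — one would still need to argue that the limit itself has subexponential growth and that the thresholds converge to its threshold, and neither is automatic. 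The paper avoids this by only requiring a single favourable radius: choose $m(i)$ with $v_i(m(i))\le\epsilon_i^{m(i)}$, transfer this to $w(m(i))\le\epsilon_i^{m(i)}$ via the ball agreement, and then use submultiplicativity of the growth function, $w(R)\le w(m(i))^{\lceil R/m(i)\rceil}\le\epsilon_i^{R+m(i)}$, to deduce $\limsup w(R)^{1/R}\le\epsilon_i$ for every $i$. That one-point-plus-submultiplicativity shortcut is exactly what makes the paper's diagonalisation go through cleanly, and it is the idea missing from your write-up.
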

\begin{proof}
  Let $Z=\langle z\rangle$ be a cyclic group whose order (possibly
  $\infty$) is divisible by the order of the $b_i$'s.  We replace $B$
  by $B\times Z$ and each $b_i$ by $b_i z$, so as to guarantee that
  all generators in $B$ have the same order.

  Let $\epsilon_i$ be a decreasing sequence tending to $1$. Denote by
  $v_i$ the growth function of the group $W_i$ introduced in
  Proposition~\ref{prop:Wconvergence}, and by $w$ the growth function
  of $W$. Let $m(i)$ be such that
  \[v_i(m(i))\le \epsilon_i^{m(i)}.
  \]
  Such an $m(i)$ exists, because $B\wr_X G$ has locally subexponential
  growth. Since the balls of radius $m(i)$ coincide in $W$ and $W_i$,
  we also have $w(m(i))\le\epsilon_i^{m(i)}$. Then, if $R>m(i)$, we get
  \[w(R)\le\epsilon_i^{R+m(i)},\] so
  $\lim\sqrt[R]{w(R)}\le\epsilon_i$. Since this holds for all $i$, the
  growth of $W$ is subexponential.
\end{proof}

\section{Imbedding a group in a derived subgroup}\label{ss:derived}
Let $G=\langle S\rangle$ be a group with fixed generating set $S$. We
denote by $\|\cdot\|_S$ the word norm on $G$, or $\|\cdot\|_G$ if the
generating set is clear. Let us now define another norm on
$[G,G]$. For this, let us say that a word $w$ in the free group $F_S$
is \emph{balanced} if it belongs to $[F_S,F_S]$; namely, if it
contains as many $s$'s as $s^{-1}$'s for every letter $s\in S$. The
\emph{perfect norm} on $[G,G]$ is
\[\|g\|_{\text{perfect}}=\min\{\|w\|\colon w\in F_S\text{ is a perfect
  word representing }g\}.
\]
We denote by $d_S(x,y)=d_G(x,y)=\|x y^{-1}\|_S$ and
$d_{\text{perfect}}(x,y)=\|x y^{-1}\|_{\text{perfect}}$ the
corresponding distances.

\begin{proposition}\label{prop:imbed[G,G]}
  Let $G=\langle S\rangle$ be a finite group with fixed generating set
  of cardinality $d$. Then there exists a finite group $H=\langle
  T\rangle$ with generating set of cardinality $d+1$ and an imbedding
  $\iota\colon G\to[H,H]$ such that
  \[2\|g\|_G\le\|\iota(g)\|_{perfect}\le 4\|g\|_G\text{ for all }g\in G.\]
\end{proposition}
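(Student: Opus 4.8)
The plan is to realize $G$ inside the derived subgroup of a wreath product over a two‑point set. Set $H:=G\wr C_2=(G\times G)\rtimes\langle t\rangle$ with $t$ acting by the coordinate swap, put $T:=\{(s,1):s\in S\}\cup\{t\}$, and define $\iota\colon G\to H$ by $\iota(g):=(g,g^{-1})$. Then $|T|=d+1$ (the listed elements are pairwise distinct), $T$ generates $H$ since conjugating $G\times 1$ by $t$ produces $1\times G$, and $\iota$ is injective. Moreover $\iota(g)=[(g,1)^{-1},t^{-1}]$ is a commutator, so $\iota(G)\subseteq[H,H]$; in fact a quick computation gives $[H,H]=\{(a,b):\bar a\bar b=1\text{ in }G^{\mathrm{ab}}\}$. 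I would remark here that the naive diagonal homomorphism $g\mapsto(g,g)$ lies in $[H,H]$ only when $G^{\mathrm{ab}}$ has exponent $2$, and that in general $G$ admits no endomorphism inducing inversion on $G^{\mathrm{ab}}$ (e.g.\ the modular group $\langle a,b\mid a^{p^2}=b^p=1,\ bab^{-1}=a^{1+p}\rangle$ with $p$ odd has none), so there is no repair keeping $\iota$ a homomorphism inside $G\wr C_2$. This is harmless because the statement only constrains norms.

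For the upper bound, fix a geodesic word $g=s_1\cdots s_n$ in $S^{\pm1}$, $n=\|g\|_G$, let $u$ be the length‑$n$ word $(s_1,1)\cdots(s_n,1)$ in $T$ (which evaluates to $(g,1)$ in $H$), and take $w:=[u^{-1},t^{-1}]=u\,t\,u^{-1}\,t^{-1}$. As a commutator in $F_T$ it is perfect; it has length $2n+2$; and it evaluates in $H$ to $(g,1)\cdot(1,g^{-1})=(g,g^{-1})=\iota(g)$. Since $\|g\|_G\ge 1$ for $g\ne 1$ (and everything vanishes for $g=1$), this gives $\|\iota(g)\|_{\mathrm{perfect}}\le 2\|g\|_G+2\le 4\|g\|_G$.

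For the lower bound — the only genuinely technical step — I would prove a coordinate‑tracking lemma. Write an arbitrary perfect word $w\in F_T$ representing $\iota(g)$ as $w=w_0\,t^{\epsilon_1}w_1\,t^{\epsilon_2}\cdots t^{\epsilon_k}w_k$, each $w_i$ a word in $\{(s,1)^{\pm1}\}$ and each $\epsilon_j=\pm1$. The image of $w$ in $C_2=H/(G\times G)$ is $t^{\epsilon_1+\cdots+\epsilon_k}$, so since $\iota(g)\in G\times G$ the number $k$ of $t$‑letters is even. Setting $\sigma_i:=\epsilon_1+\cdots+\epsilon_i\bmod 2$ and letting $\bar w_i\in G$ be the element spelled by $w_i$, a short induction on $k$ (tracking how the swap conjugates) shows that $w$ evaluates to $\bigl(\prod_{i:\sigma_i=0}\bar w_i,\ \prod_{i:\sigma_i=1}\bar w_i\bigr)$, products in order of increasing $i$ (and $\sigma_k=0$, so the second coordinate indeed closes up). Comparing with $\iota(g)=(g,g^{-1})$ forces $\prod_{\sigma_i=0}\bar w_i=g$ and $\prod_{\sigma_i=1}\bar w_i=g^{-1}$; since $\|\cdot\|_G$ is subadditive and each $w_i$, read with $(s,1)\mapsto s$, is an $S^{\pm1}$‑word of length $\|w_i\|$ for $\bar w_i$, we get $\sum_{\sigma_i=0}\|w_i\|\ge\|g\|_G$ and $\sum_{\sigma_i=1}\|w_i\|\ge\|g\|_G$, hence $|w|\ge\sum_i\|w_i\|\ge 2\|g\|_G$. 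Minimising over $w$ yields $\|\iota(g)\|_{\mathrm{perfect}}\ge 2\|g\|_G$, which finishes the proof.

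The main obstacle is the bookkeeping in the tracking lemma — getting the $t$‑conjugation and the block indexing exactly right, and noting that perfect words have an even number of $t$'s — since the choice of $H$, $T$, $\iota$ and the upper bound are essentially forced once one accepts that $\iota$ need not be a homomorphism. If a homomorphic $\iota$ is needed later, one can instead take $H=L\wr C_2$ for a finite perfect (even simple) overgroup $L\supseteq G$ and let $\iota$ be the diagonal embedding, which does land in $[H,H]=L\times L$; the price is a less transparent generating set for $H$, and re‑deriving the two norm bounds then needs the same tracking argument together with a choice of $T$ compatible with $S$.
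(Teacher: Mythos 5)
Your proof is correct, but it takes a genuinely different and substantially simpler route than the paper's. The paper realizes $H$ inside $Q\wr C_{2m}$, where $m=|G|$ and $Q$ is a finite quotient of $G*\Z$ in which the ball of radius $m$ is the same as in $G*\Z$; the generator $t_s$ has the value sequence $(1,s,\dots,s^{m-1},1,s^x,\dots,s^{(m-1)x})$, $\iota(g)$ is the function with values $(g,\dots,g,g^x,\dots,g^x)$, and the upper bound comes from $\iota(s)=[t_s,r]$. The point of this more elaborate construction is that their $\iota$ is a genuine group homomorphism. Your $\iota(g)=(g,g^{-1})$ into $G\wr C_2$ is \emph{not} a homomorphism unless $G$ is abelian, as you observe; it is only an injective map satisfying $2\|g\|_G\le\|\iota(g)\|_{\mathrm{perfect}}\le 4\|g\|_G$. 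You argue, correctly, that this is all the statement is used for downstream: Proposition~\ref{prop:W_S} produces a bi-Lipschitz group inclusion $[H_s,H_s]\hookrightarrow W_S$, the map $G_s\to[H_s,H_s]$ is then composed with it purely as a map of metric spaces, and Definition~\ref{defn:coarseimbed} and the proof of Theorem~\ref{thm:Edistortion} only ever feed $1$-Lipschitz maps (not homomorphisms) into the non-coarse-imbeddability hypothesis. So nothing is lost. Two small remarks. First, your lower bound actually proves the stronger inequality $\|\iota(g)\|_T\ge 2\|g\|_G$ for the ordinary word norm, with no use of balancedness beyond the (automatic) fact that the number of $t$-letters is even because $\iota(g)\in G\times G$; this is cleaner than the paper's tracking argument, which the paper only sketches. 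Second, note that the paper does at one point call $\iota_i$ an ``inclusion map'' (in the corollary following Proposition~\ref{prop:W_S}), suggesting the authors had a subgroup imbedding in mind; with your construction one should reword such phrases to ``bi-Lipschitz map'', but the logic of the paper is otherwise unchanged. Your concluding suggestion — replace $G$ by a finite perfect overgroup $L$ and use the diagonal in $L\wr C_2$ to regain a homomorphism — is a reasonable way to recover the stronger form if it were ever needed, though, as you say, the choice of a $(d+1)$-element generating set compatible with the word-length comparison becomes less immediate.
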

\begin{proof}
  Let $m$ denote the cardinality of $G$. Since $G*\Z$ is residually
  finite, there exists a finite quotient $Q=\langle S\cup\{x\}\rangle$
  of $G*\Z$ such that the balls of radius $m$ in $G*\Z$ and in $Q$
  coincide.

  In the group $Q\wr C_{2m}$, consider the following elements: for
  every $s\in S$, the function $t_s\colon C_{2m}\to Q$ with values
  $(1,s,s^2,\dots,s^{m-1},1,s^x,s^{2x},\dots,s^{(m-1)x})$; and the
  generator $r$ of $C_{2m}$. Set
  \[T=\{t_s:s\in S\}\cup\{r\}\text{ and }H=\langle T\rangle.\]
  Define $\iota\colon G\to Q\wr C_{2m}$ by
  \[\iota(g)\colon C_{2m}\to Q\text{ taking values }(g,\dots,g,g^x,\dots,g^x).\]

  Note first that, for $s\in S$, we have $\iota(t)=[t_s,r]$. This
  immediately implies $\iota(G)\le[H,H]$ and gives the inequality
  $\|\iota(h)\|_{perfect}\le 4\|h\|_G$.

  Note then that if a word of length $\le m/2$ in $T$ is not of the
  form $q_0r^{-1}q_1 r\cdots q_{2n-2}r^{-1}q_{2n-1}r q_{2n}$ with all
  $q_i\in\langle t_s:s\in S\rangle$, then it cannot belong to the
  image of $\iota$. On the other hand, if it is of that form, then
  write it as $f\colon C_{2m}\to Q$, and note that $f(1)$ depends only
  on $q_1,\dots,q_{2n-1}$ and has length at most the sum of their
  lengths. This gives the other inequality
  $2\|h\|_G\le\|\iota(h)\|_{perfect}$.
\end{proof}

\section{Imbedding a sequence of groups in $W$}\label{ss:inW}
We now apply the construction of Section~\ref{ss:W} to a restricted
direct product of finite groups.  The heart of the argument is the
following variant of Corollary~\ref{cor:subexp}:
\begin{proposition}\label{prop:W_S}
  Let $(H_i)_{i\in\N}$ be a sequence of $d$-generated finite groups.

  Then there exists a family of groups $(W_S)_{S\subset\N}$ indexed by
  subsets $S$ of $\N$, each of subexponential growth, with the
  following property: for all $s\in S$, there is an imbedding
  $\Psi_s\colon [H_s,H_s]\to W_S$ that is $(K,L)$-bi-Lipschitz with
  respect to the perfect metric of $[H_s,H_s]$ and the word metric on
  $W_S$, and such that the constants $K,L$ depend only on $\{H_i\colon
  i<s\}$.

  Furthermore, if $S=\{s(1),s(2),\dots\}$, then
  $W_{\{s(1),\dots,s(i+1)\}}$ is constructed out of
  $W_{\{s(1),\dots,s(i)\}}$, and the sequence
  $(W_{\{s(1),\dots,s(i)\}})_{i\in\N}$ converges to $W_S$ in the
  Cayley topology.
\end{proposition}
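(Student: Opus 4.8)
The plan is to combine Proposition~\ref{prop:imbed[G,G]} (which replaces each $H_i$ by the relevant finite group and gives us the perfect metric) with the apparatus of Section~\ref{ss:W}, taking for $B$ the restricted direct product $\bigoplus_{i\in\N}[H_i,H_i]$ and for the sequence $(b_i)$ a suitable enumeration of generators of the perfect subgroups. First I would fix, once and for all, the group $G=\grig$ acting on $X$ together with the rectifiable, spreading, locally stabilizing sequence $(x_i)$ supplied by Proposition~\ref{prop:grig}; this choice is independent of $S$. For a given finite or infinite subset $S=\{s(1)<s(2)<\cdots\}\subseteq\N$, I would then take $B=B_S=\bigoplus_{k}[H_{s(k)},H_{s(k)}]$, enumerate within each factor a generating tuple realizing the perfect norm (padded so all chosen generators have a common order, as in the proof of Corollary~\ref{cor:subexp}), and feed this into the construction $W=\langle G,f\rangle\le B^X\rtimes G$. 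Since each $[H_{s(k)},H_{s(k)}]$ is finite, $B_S$ has locally subexponential growth, so Corollary~\ref{cor:subexp} applies verbatim and $W_S:=W$ has subexponential growth.

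Next I would establish the bi-Lipschitz imbedding. By Lemma~\ref{lem:contains [B,B]} (using that $(x_i)$ is separating), $W_S$ contains $[B_S,B_S]\supseteq[H_{s(k)},H_{s(k)}]$ for each $k$, since each factor is perfect; this gives the set-theoretic imbedding $\Psi_{s(k)}\colon[H_{s(k)},H_{s(k)}]\to W_S$. For the metric comparison, the upper bound (Lipschitz constant $L$) is immediate: an element of $[H_{s(k)},H_{s(k)}]$ represented by a perfect word $w$ in the generators of $H_{s(k)}$ is realized in $W_S$ by the corresponding product of conjugates $[f^{g_i},f^{g_j}]$, and the word length in $W_S$ of each such conjugating element $g_i\in G$ is bounded by a constant depending only on the already-chosen indices $n(1),\dots,n(s(k))$, hence — crucially — only on $\{H_i\colon i<s(k)\}$ once the rapidly increasing sequence $(n(i))$ has been chosen as in Proposition~\ref{prop:Wconvergence}. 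For the lower bound (constant $K$), I would invoke Proposition~\ref{prop:Wconvergence}: choosing $(m(i))$ large enough and then $(n(i))$ accordingly, the ball of radius $m(s(k))$ in $W_S$ coincides with the ball of radius $m(s(k))$ in the finite-stage group $W_{\{s(1),\dots,s(k)\}}$, in which the support of any short element lies in a bounded neighbourhood of $x_{n(1)},\dots,x_{n(k)}$; projecting to the $x_{n(k)}$-coordinate recovers the perfect norm up to a multiplicative constant, exactly as the projection argument at the end of the proof of Proposition~\ref{prop:imbed[G,G]}. This yields $\|g\|_{\mathrm{perfect}}/K\le\|\Psi_{s(k)}(g)\|_{W_S}\le L\|g\|_{\mathrm{perfect}}$ with $K,L$ depending only on $\{H_i:i<s(k)\}$.

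Finally, the convergence statement: if $S$ is enumerated as $s(1)<s(2)<\cdots$, then $W_{\{s(1),\dots,s(i)\}}$ is built from the truncated sequence $(b_1,\dots,b_{m(i)})$ using the \emph{same} initial segment $n(1)<\cdots<n(m(i))$ of the index sequence, so it is literally obtained from $W_{\{s(1),\dots,s(i-1)\}}$ by adjoining one more block of generators at the next available orbit points; and Proposition~\ref{prop:Wconvergence} says the radius-$m(i)$ balls of $W_{\{s(1),\dots,s(i)\}}$ and of $W_S$ agree, so the Cayley distance between them tends to $0$, i.e.\ $W_{\{s(1),\dots,s(i)\}}\to W_S$ in the Cayley topology.

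The main obstacle — and the only point requiring genuine care — is bookkeeping the dependence of the constants $K,L$: one must verify that in the construction of $(n(i))$ via Proposition~\ref{prop:Wconvergence} the value $n(i)$ (and hence the word-length bound on the conjugators $g_{n(i)}$, and hence $K$ and $L$ for the block indexed $s(k)$) genuinely depends only on the ball of radius $m(i)$ in $\langle b_1,\dots,b_{i-1}\rangle$, i.e.\ on the groups $H_{s(1)},\dots,H_{s(k-1)}$ and not on $H_{s(k)}$ itself. This is exactly the "furthermore" clause of Proposition~\ref{prop:Wconvergence}, so the work is to thread that clause through the perfect-metric comparison of Proposition~\ref{prop:imbed[G,G]}; everything else is the direct-product packaging of results already proved.
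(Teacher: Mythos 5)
The central step goes wrong at the choice of the base group $B$. You take $B_S=\bigoplus_k[H_{s(k)},H_{s(k)}]$, whereas the paper takes $B={\prod_{i\in S}}'H_i$, the restricted product of the \emph{full} finite groups, generated by $\{t_{1,1},\dots,t_{1,d},t_{2,1},\dots\}$. This difference is not cosmetic, and your version breaks down at two places.

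First, to recover $[H_{s(k)},H_{s(k)}]$ inside $W_S$ you invoke Lemma~\ref{lem:contains [B,B]}, which gives $W_S\supseteq[B_S,B_S]$, and then claim $[B_S,B_S]\supseteq[H_{s(k)},H_{s(k)}]$ ``since each factor is perfect.'' But the $H_i$ in Proposition~\ref{prop:W_S} are arbitrary $d$-generated finite groups; there is no hypothesis that their derived subgroups are perfect, and in general $[[H,H],[H,H]]\subsetneq[H,H]$. With the paper's choice one has $[B,B]=\bigoplus_i[H_i,H_i]$ on the nose, so Lemma~\ref{lem:contains [B,B]} delivers exactly the desired subgroup with no extra hypothesis.

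Second, the perfect metric on $[H_s,H_s]$ is by definition the minimal length of a \emph{balanced word in the generators $T_s$ of $H_s$}, not a word metric with respect to some generating set of $[H_s,H_s]$. Your plan to ``enumerate within each factor a generating tuple realizing the perfect norm'' is therefore not meaningful as stated: no finite generating set of $[H_s,H_s]$ yields the perfect metric, and in any case the minimal number of generators of $[H_s,H_s]$ is not bounded in terms of $d$, so the feeder sequence $(b_i)$ would not come in blocks of uniformly bounded size. The paper avoids this by making the $b_j$'s be the generators $t_{i,j}$ of the whole $H_i$; the explicit map $t_{i,j}\mapsto f^{g_j}$ then sends a balanced word directly to an element of $W$ concentrated at $x_{n(di)}$, giving the upper bound $\|\Psi_i(h)\|_W\le(2L'+1)\|h\|_{\text{perfect}}$ and the lower bound $\|\Psi_i(h)\|_W\ge\|h\|$ because each generator of $W$ contributes at most one letter of $T_i$. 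Your replacement argument (projecting and invoking the projection step from the proof of Proposition~\ref{prop:imbed[G,G]}, which lives in the unrelated context of $Q\wr C_{2m}$) does not supply this. The rest of your outline (using Proposition~\ref{prop:grig}, Corollary~\ref{cor:subexp} for subexponential growth, and the ``furthermore'' clause of Proposition~\ref{prop:Wconvergence} to control dependence of $K,L$ and the Cayley convergence) matches the paper once $B$ is chosen correctly.
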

\begin{proof}
  Up to replacing $(H_i)_{i\in\N}$ by $(H_i)_{i\in S}$, we lighten
  notation and suppose $S=\N$ or a prefix $\{1,2,\dots,n\}$ thereof.

  Let us write $T_i=\{t_{i,1},\dots,t_{i,d}\}$ the generating set of
  $H_i$. We consider the restricted direct product
  $B={\prod_{i\ge1}}'H_i$.  It is a countable, locally finite group,
  generated by
  $\{t_{1,1},\dots,t_{1,d},t_{2,1},\dots\}=\{b_1,b_2,\dots\}$. We
  consider the group $W$ constructed in Section~\ref{ss:W}, noting
  that the sequence $n(i)$ may be chosen, by
  Corollary~\ref{cor:subexp}, such that $W$ has subexponential growth,
  and that $n(i)$ depends only on $H_1,H_2,\dots,H_{\lceil
    i/d\rceil-1}$.

  Consider $[H_i,H_i]$ as a subgroup of $W$, imbedded as the functions
  $X\to H_i\subset B$ supported only at $x_{n(di)}$. This is an
  imbedding by Lemma~\ref{lem:contains [B,B]}. Denote by $\Psi_i\colon
  H_i\to W$ this imbedding.

  Assume that $n(1),\dots n(di)$ have already been chosen; and note
  that their choice relies only on $H_1,\dots,H_{i-1}$. Recall also
  that $f(x_{n(di-d+j)})=t_{i,j}$ in the construction of $W$. We now
  show that there exist constants $K,L$ independent of $H_i$ such that
  the imbedding $\Psi_i\colon[H_i,H_i]\to W$ is
  $(K,L)$-bi-Lipschitz. In other words, independently of $i$, the
  distortion of $[H_i,H_i]$ in $W$ is at worst $\rho(t)=tK/L$.

  Let $g_1,\dots,g_d\in G$ be such that $x_{n(di-d+j)}g_j = x_{n(di)}$
  and the only $x_k$ mapped to another $x_\ell$ under $g_{j'}g_j^{-1}$
  are $x_{n(di-d+j')}g_{j'}g_j^{-1}=x_{n(di-d+j)}$; such elements
  exist because $(x_i)$ is separating. Let $L'$ be an upper bound for
  the lengths of all $g_1,\dots,g_d$. This condition ensures that the
  functions $f^{g_1},\dots,f^{g_d}$ have disjoint support except at
  $x_{n(di)}$ or where they coincide.

  Here is an explicit way of computing the imbedding $\Psi_i\colon
  [H_i,H_i]\to W$: for $h\in [H_i,H_i]$, write it as a minimal-length
  balanced word in $T_i$, and map each letter $t_{i,j}$ to $f^{g_j}$.

  On the one hand, $\|\Psi_i(h)\|_W\le(2L'+1)\|h\|_{\text{perfect}}$
  because each letter gets mapped to a word of length
  $1+2\|g_j\|\le1+2L'$; on the other hand, $\|\Psi_i(h)\|_W\ge\|h\|$,
  because at most one element of $T$ is contributed by each generator
  of $W$.
\end{proof}

\begin{corollary}
  Let $(G_i)_{i\in\N}$ be a sequence of $d$-generated finite groups.

  Then there exists a family of groups $(W_S)_{S\subset\N}$ indexed by
  subsets $S$ of $\N$, each of subexponential growth, with the
  following property: for all $s\in S$, there is an imbedding
  $\Psi_s\colon G_s\to W_S$ that is $(K,L)$-bi-Lipschitz with respect
  to the word metrics, and such that the constants $K,L$ depend only
  on $\{G_i\colon i<s\}$.
\end{corollary}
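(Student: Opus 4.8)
The plan is to combine Proposition~\ref{prop:imbed[G,G]} with Proposition~\ref{prop:W_S}. Given the sequence $(G_i)_{i\in\N}$ of $d$-generated finite groups, first apply Proposition~\ref{prop:imbed[G,G]} to each $G_i$: this produces a $(d+1)$-generated finite group $H_i$ together with an imbedding $\iota_i\colon G_i\to[H_i,H_i]$ satisfying $2\|g\|_{G_i}\le\|\iota_i(g)\|_{\text{perfect}}\le4\|g\|_{G_i}$ for all $g\in G_i$. Crucially, $H_i$ depends only on $G_i$, so knowing $\{H_j\colon j<s\}$ is the same as knowing $\{G_j\colon j<s\}$.

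Next I would feed the sequence $(H_i)_{i\in\N}$ of $(d+1)$-generated finite groups into Proposition~\ref{prop:W_S}. This yields a family $(W_S)_{S\subset\N}$ of groups of subexponential growth together with, for each $s\in S$, an imbedding $\Psi_s\colon[H_s,H_s]\to W_S$ that is $(K,L)$-bi-Lipschitz with respect to the perfect metric on $[H_s,H_s]$ and the word metric on $W_S$, with $K,L$ depending only on $\{H_i\colon i<s\}$, equivalently only on $\{G_i\colon i<s\}$.

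Finally I would define the desired imbedding as the composition $\Psi_s\circ\iota_s\colon G_s\to W_S$. Chaining the two bi-Lipschitz estimates, one gets, for $g,g'\in G_s$ with $h=\iota_s(g(g')^{-1})$,
\[
\frac{1}{L}\cdot 2\,\|g(g')^{-1}\|_{G_s}\le\frac{1}{L}\|h\|_{\text{perfect}}\le\|\Psi_s(h)\|_{W_S}\le K\|h\|_{\text{perfect}}\le 4K\|g(g')^{-1}\|_{G_s},
\]
so $\Psi_s\circ\iota_s$ is $(K',L')$-bi-Lipschitz for the word metrics with $K'=4K$ and $L'=L/2$ (after clearing denominators); these constants still depend only on $\{G_i\colon i<s\}$. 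The last sentence of Proposition~\ref{prop:W_S} about Cayley-topology convergence carries over verbatim if one wishes to state it, though it is not needed for the corollary as phrased.

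The only mild subtlety — not really an obstacle — is bookkeeping the generating-set cardinality: Proposition~\ref{prop:W_S} is stated for $d$-generated groups, and our $H_i$ are $(d+1)$-generated, so one simply applies that proposition with $d$ replaced by $d+1$. Everything else is a direct composition of already-established maps and estimates.
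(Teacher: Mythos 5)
Your proposal is correct and follows the paper's proof essentially verbatim: imbed each $G_i$ into a $(d+1)$-generated $H_i$ via Proposition~\ref{prop:imbed[G,G]} so that $\iota_i\colon(G_i,d_{G_i})\to([H_i,H_i],d_{\text{perfect}})$ is $(2,4)$-bi-Lipschitz, then apply Proposition~\ref{prop:W_S} to $(H_i)$ and compose. The only difference is that you spell out the chaining of the two bi-Lipschitz estimates and the bookkeeping of constants, which the paper leaves implicit.
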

\begin{proof}
  Using Proposition~\ref{prop:imbed[G,G]}, imbed each $G_i$ in a
  $(d+1)$-generated group $H_i$ in such a manner that the inclusion
  map $\iota_i\colon (G_i,d_{G_i})\to (H_i,d_{\text{perfect}})$ is
  $(2,4)$-bi-Lipschitz. Apply then Proposition~\ref{prop:W_S} to the
  family $(H_i)_{i\in\N}$.
\end{proof}

\section{Imbeddings into metric spaces}\label{ss:inMS}
Let $\mathcal X$ be a metric space. Given a sequence of metric spaces
such as $((G_i,d_{G_i}))_{i\in\N}$, we say that it \emph{imbeds \coarse ly}
in $\mathcal X$ if there exists an unbounded increasing function
$\rho$ and a sequence of $1$-Lipschitz imbeddings $(\Phi_i\colon
G_i\to\mathcal X)$, each with distortion better than $\rho$. We are
interested in the opposite property:
\begin{definition}\label{defn:coarseimbed}
  Let $\mathcal X$ be a metric space. A sequence of metric spaces
  $((G_i,d_{G_i}))_{i\in\N}$ \emph{does not imbed \coarse ly} in $\mathcal
  X$ if the following holds: there exists a constant $M$ such that, if
  $(\Phi_i\colon G_i\to\mathcal X)$ is a sequence of $1$-Lipschitz
  imbeddings, then, for all $t\in\R$, there are $i\in\N$ and $x,y\in
  G_i$ with $d(x,y)\ge t$ and $d(\Phi_i(x),\Phi_i(y))\le M$.
\end{definition}

\begin{theorem}\label{thm:Edistortion}
  Let $(G_i)_{i\in\N}$ be a sequence of $d$-generated finite
  groups. Assume furthermore that $(G_i)$ does not imbed \coarse ly in
  a metric space $\mathcal X$. Let $\rho$ be any unbounded increasing
  function $\R_+\to\R_+$. Then there exists a finitely generated group
  $W$ of subexponential growth such that every imbedding of $W$ in
  $\mathcal X$ has distortion worse than $\rho$.
\end{theorem}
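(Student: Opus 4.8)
The plan is to feed the family $(G_i)$ into the machine of Sections~\ref{ss:derived}--\ref{ss:inW} and then to choose the index set adaptively, playing the growth of $\rho$ against the hypothesis on $\mathcal X$. First one applies Proposition~\ref{prop:imbed[G,G]} to replace each $G_i$ by a $(d+1)$-generated finite group $H_i$ containing it, with $\|g\|_{G_i}$ comparable, up to the universal factors $2$ and $4$, to the perfect norm of its image in $[H_i,H_i]$; and then Proposition~\ref{prop:W_S} to produce, for every infinite subset $S=\{s(1)<s(2)<\cdots\}$ of $\N$, a finitely generated group $W_S$ of subexponential growth (Corollary~\ref{cor:subexp}) together with imbeddings $\Psi_{s(k)}\colon G_{s(k)}\to W_S$ with
\[\|g\|_{G_{s(k)}}\le\|\Psi_{s(k)}(g)\|_{W_S}\le C_k\,\|g\|_{G_{s(k)}}\qquad(g\in G_{s(k)}).\]
The decisive feature, which is exactly what Proposition~\ref{prop:W_S} delivers, is that $C_k$ depends only on $G_{s(1)},\dots,G_{s(k-1)}$, not on $s(k)$ nor on the later terms of $S$; also, by Proposition~\ref{prop:Wconvergence} the groups $W_{\{s(1),\dots,s(k)\}}$ converge to $W_S$ in the Cayley topology.

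Let $M$ be the constant furnished by Definition~\ref{defn:coarseimbed} for $(G_i)$. Since $(G_i)$ does not imbed coarsely, the diameters $\operatorname{diam}(G_i)$ are unbounded (a uniformly bounded sequence of $d$-generated groups has bounded cardinality, hence imbeds coarsely), so after discarding terms we may assume $\operatorname{diam}(G_i)\to\infty$. Then I would choose $S$ by induction: with $s(1),\dots,s(k-1)$ fixed, the number $C_k$ is determined, and — using that $(G_i)$ does not imbed coarsely in $\mathcal X$ and that $\rho$ is unbounded increasing — one picks $s(k)>s(k-1)$ so far out that $\operatorname{diam}(G_{s(k)})>\operatorname{diam}(G_{s(k-1)})$, that $\rho\bigl(\tfrac12\operatorname{diam}(G_{s(k)})\bigr)$ dwarfs the error produced at stage $k$, and that any $C_k$-Lipschitz map of $G_{s(k)}$ into $\mathcal X$ is forced to identify a pair of points at $G_{s(k)}$-distance $\ge\tfrac12\operatorname{diam}(G_{s(k)})$ up to a controlled distance. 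Set $W:=W_S$; it has subexponential growth and contains the infinite subsequence $(G_{s(k)})_k$.

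For the conclusion, let $\Phi\colon W\to\mathcal X$ be any $1$-Lipschitz imbedding. For each $k$ the composite $\Phi\circ\Psi_{s(k)}\colon G_{s(k)}\to\mathcal X$ is $C_k$-Lipschitz, so by the choice of $s(k)$ there are $x_k\ne y_k$ in $G_{s(k)}$ with $t_k:=d_{G_{s(k)}}(x_k,y_k)\ge\tfrac12\operatorname{diam}(G_{s(k)})$ and $d\bigl(\Phi\Psi_{s(k)}(x_k),\Phi\Psi_{s(k)}(y_k)\bigr)$ bounded in the prescribed way. Writing $w_k=\Psi_{s(k)}(x_k)$, $w_k'=\Psi_{s(k)}(y_k)$ gives $d_W(w_k,w_k')\ge t_k$, so $\rho_\Phi(t_k)\le d(\Phi w_k,\Phi w_k')<\rho(t_k)$. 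Since $\rho_\Phi$ is nondecreasing and $t_k\to\infty$, once the estimates have been arranged so that the successive scales cover every large $t$, it follows that $\rho_\Phi(t)<\rho(t)$ for all $t$ large enough, i.e.\ $\Phi$ has distortion worse than $\rho$.

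The heart of the matter — and the step I expect to require the most care — is the interplay between the distortion constant $C_k$, which necessarily tends to infinity with $k$, the scale $t_k$ at which $G_{s(k)}$ witnesses non-embeddability, and the arbitrary unbounded $\rho$. One must exploit that $C_k$ is pinned down \emph{before} $s(k)$ is selected (so that the requirement "$\rho$ at the chosen scale beats the stage-$k$ error" can be met by pushing $s(k)$ far enough out), and one needs the non-coarse-embeddability hypothesis in a form that survives passing from $1$-Lipschitz to $C_k$-Lipschitz maps — equivalently, rescaling the metric of $\mathcal X$ by the fixed factor $C_k$ — so that the collapsed pair in $G_{s(k)}$ can be transported into $W$ with a usable bound on $d(\Phi w_k,\Phi w_k')$. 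Making these choices mutually compatible and verifying that they do leave no uncovered large scale is the bulk of the work; everything else is the bookkeeping of Propositions~\ref{prop:imbed[G,G]}, \ref{prop:W_S} and Corollary~\ref{cor:subexp}.
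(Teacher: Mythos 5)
Your strategy is the right one and is essentially the paper's: pass from $G_i$ to $H_i$ via Proposition~\ref{prop:imbed[G,G]}, build $W_S$ by Proposition~\ref{prop:W_S} and Corollary~\ref{cor:subexp}, exploit the fact that the bi-Lipschitz constant $C_k$ for $\Psi_{s(k)}$ is determined by $s(1),\dots,s(k-1)$ alone, and choose $S$ adaptively so that non-embeddability of $G_{s(k)}$ kills the distortion at scales beating $\rho$. You also correctly flag the delicate points: transporting the non-coarse-embeddability constant from $1$-Lipschitz to $C_k$-Lipschitz maps, and arranging that the witnessing scales leave no gaps. But the covering step, which you explicitly defer (``once the estimates have been arranged so that the successive scales cover every large $t$''), hides a genuine circularity that your single-sequence construction cannot resolve.

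Concretely: to get $\rho_\Phi(t)<\rho(t)$ for \emph{all} large $t$, not merely on the sequence $(t_k)$, you must handle $t\in(t_{k-1},t_k]$. There $\rho_\Phi(t)\le\rho_\Phi(t_k)\le C_kM$ and $\rho(t)\ge\rho(t_{k-1})$, so you need $C_kM<\rho(t_{k-1})$. Since $t_{k-1}$ is the scale witnessed inside $G_{s(k-1)}$, this is a constraint that must be enforced when choosing $s(k-1)$; but $C_k$ depends on $s(1),\dots,s(k-1)$ — in particular on $s(k-1)$ itself — so you would need to know $C_k$ before fixing $s(k-1)$, while $C_k$ is only determined after $s(k-1)$ is fixed. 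Your constraint ``$\rho(\tfrac12\operatorname{diam}G_{s(k)})$ dwarfs the error at stage $k$'' gives $\rho(t_k)>C_kM$, which is off by one: it does not give $\rho(t_{k-1})>C_kM$. The paper breaks this circularity by taking $W=W_S\times W_{S'}$ with two interleaved sequences $S=\{s(1),s(3),\dots\}$, $S'=\{s(2),s(4),\dots\}$: since $G_{s(i)}$ sits in the factor determined by the parity of $i$, the constant $L_{i+1}$ depends only on the $S'$-predecessors $s(2),\dots,s(i-1)$ (respectively $S$-predecessors), so both $L_i$ and $L_{i+1}$ are known \emph{before} $s(i)$ is chosen. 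One can then pick $t_i$ with $\rho(t_i)>L_{i+1}M$ and only afterwards pick $s(i)$ large enough to witness the scale $t_i/K_i$, and the covering $\rho_\Phi(t)\le L_iM<\rho(t_{i-1})\le\rho(t)$ goes through for all $t\in[t_{i-1},t_i)$. Without this device (or an equivalent decoupling), the inductive selection of $S$ does not close.
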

\begin{proof}
  Using Proposition~\ref{prop:imbed[G,G]}, imbed each $G_i$ in a group
  $H_i$ whose derived subgroup, with the perfect metric, contains a
  bi-Lipschitz copy of $G_i$. We identify $G_i$ with its image in $H_i$.

  Since the groups $G_i$ do not imbed \coarse ly in $\mathcal X$,
  there exists a constant $M$ such that, if $(\Phi_i\colon
  G_i\to\mathcal X)$ is a sequence of $1$-Lipschitz imbeddings then,
  for all $t\in\R$, there are $i\in\N$ and $x,y\in G_i$ with
  $d(x,y)\ge t$ and $d(\Phi_i(x),\Phi_i(y))\le M$.

  The group $W$ will be of the form $W=W_S\times W_{S'}$, for
  sequences $S=\{s(1),s(3),s(5),\dots\}$ and $S'=\{s(2),s(4),\dots\}$
  that we construct iteratively by applying Proposition~\ref{prop:W_S}
  to the family $(H_i)$. Assume that $s(1),\dots,s(i-1)$ have been
  constructed. The terms $s=s(i)$ and $s'=s(i+1)$ have not yet been
  determined, but we already know the constants
  $K_i,L_i,K_{i+1},L_{i+1}$ such that the imbedding of $G_s$ into
  $W_S$ or $W_{S'}$ will be $(K_i,L_i)$-bi-Lipschitz and the imbedding
  of $G_{s'}$ into $W_{S'}$ or $W_S$ will be
  $(K_{i+1},L_{i+1})$-bi-Lipschitz.

  We now make use of the unbounded function $\rho$. Let $t_i\in\R$ be
  large enough so that $\rho(t_i)>L_{i+1}M$. Since the $(G_i)$ do not
  imbed \coarse ly, we can choose $s$ large enough so that there exist
  $x,y\in G_s$ with $d(x,y)\ge t_i/K_i$ and $d(\Phi_s(x),\Phi_s(y))\le
  M$ in any $1$-Lipschitz imbedding $\Phi_s$ of $G_s$ into $\mathcal
  X$. Without loss of generality, the sequences $(t_i)$ and $(s(i))$
  are strictly increasing. This determines $s=s(i)$, and finishes the
  inductive construction of $S$ and $S'$.

  Let us now check that the group $W$ just constructed has the desired
  property. Let $\Phi\colon W\to\mathcal X$ be a $1$-Lipschitz
  imbedding. By composing with the imbedding of $G_s$ in $W_S$ or
  $W_{S'}$, we get for all $s\in S\cup S'$ imbeddings
  $\Phi_s=\Phi\circ\Psi_s$ of $G_s$ into $\mathcal X$.

  Consider $t\in\R_+$, and suppose $t\ge t_1$. Let $i$ be such that
  $t_{i-1}\le t<t_i$. Set $s=s(i)$.  Following the construction above,
  the imbedding $\Phi_s$ is $(K_i,L_i)$-Lipschitz, so there are
  $x,y\in G_s$ with $d(x,y)\ge t_i/K_i$ so
  $d(\Psi_s(x),\Psi_s(y))\ge t_i$ while $d(\Phi_s(x),\Phi_s(y))\le L_i
  M$. This proves that the distortion $\rho_\Phi$ of $\Phi$
  satisfies
  \[\rho_\Phi(t)\le\rho_\Phi(t_i)\le L_i M<\rho(t_{i-1})\le\rho(t),\]
  so the distortion of $W$ is worse than $\rho$.
\end{proof}
We note from the proof that the distortion of a single copy $W_S$ is
worse than $\rho$ along an unbounded sequence.

\section{Superexpanders}\label{ss:superexpanders}
We now exhibit a sequence of finite groups $(H_i)$ with particularly
bad imbedding properties and fixed-size generating set. We recall the
following definition from~\cite{lafforgue:Tbanachique}.

A Banach space $\mathcal X$ is called \emph{of type $>1$}, or
\emph{B-convex}, if there are $n\in\N$ and $\epsilon>0$ such that no
$1$-Lipschitz imbedding $(\mathbb C^n,\|\cdot\|_1)\to E$ with
distortion better than $t\mapsto t/(1+\epsilon)$ exists. For example,
Hilbert space is B-convex with $n=2$ and any
$\epsilon>\sqrt2-1$. Lafforgue gives the following construction of
expanders:
\begin{proposition}[\cite{lafforgue:Tbanachique}*{Corollaire~0.5}]\label{prop:lafforgue}
  There exists a sequence $(H_i)_{i\in\N}$ of finite quotients of a
  finitely generated group $H$ such that the sequence of quotient
  Cayley graphs of the $H_i$ does not admit any \coarse\ imbedding
  into a B-convex Banach space.
\end{proposition}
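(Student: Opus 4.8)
The plan is to realise the Cayley graphs of the $H_i$ as \emph{expanders relative to every B-convex space}, and then to invoke the classical principle that such a family admits no \coarse\ imbedding into the space against which it expands. For the group I would take the higher-rank arithmetic group $H$ underlying Lafforgue's construction: a lattice in $\mathrm{SL}_3(F)$ for $F$ a non-archimedean local field (concretely, say, $\mathrm{SL}_3(\mathbb{F}_q[t])$, a non-cocompact lattice in $\mathrm{SL}_3(\mathbb{F}_q((1/t)))$). Such an $H$ is finitely generated, residually finite (being linear), and its finite congruence quotients $(H_i)_{i\in\N}$ — reducing modulo irreducible polynomials one gets $\mathrm{SL}_3(\mathbb{F}_{q^k})$ and the like — form an infinite family of Cayley graphs of bounded degree with $|H_i|\to\infty$.

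The essential input is Lafforgue's strengthened property~(T): there is a single threshold, and for every Banach space $\mathcal X$ of type $>1$ a constant $\kappa=\kappa(\mathcal X)>0$ — depending only on the convexity parameters $n,\epsilon$ of $\mathcal X$ — such that every finite quotient satisfies the $\mathcal X$-valued Poincar\'e inequality
\[\frac1{|H_i|^2}\sum_{x,y\in H_i}\|f(x)-f(y)\|^2\le\frac{\kappa}{|H_i|}\sum_{x\sim y}\|f(x)-f(y)\|^2\qquad\text{for all }f\colon H_i\to\mathcal X,\]
the right-hand sum ranging over edges of the Cayley graph. I would derive this from Lafforgue's fixed-point theorem — every affine action of $H$ on a B-convex $\mathcal X$ with uniformly bounded linear part has a bounded orbit — via the standard averaging argument that passes from $H$ to its finite quotients (a Schreier-graph / Kazhdan-constant computation). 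B-convexity enters exactly here: it is what keeps the relevant averaging operators on $L^2(H_i;\mathcal X)$ under control (no large copies of $\ell^1_n$), and it is what makes $\kappa$ depend only on $n,\epsilon$.

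Granting the inequality, the conclusion is the usual expander obstruction. Suppose, for contradiction, that $(\Phi_i\colon H_i\to\mathcal X)$ is a sequence of $1$-Lipschitz imbeddings forming a \coarse\ imbedding, so that $\rho:=\inf_i\rho_{\Phi_i}$ is unbounded and increasing and $\rho_{\Phi_i}\ge\rho$ for every $i$. Put $f=\Phi_i$ in the inequality above. On the right, each edge contributes at most $1$ since $\Phi_i$ is $1$-Lipschitz, so the right-hand side is bounded by a constant independent of $i$. On the left, since the Cayley graph of $H_i$ has bounded degree there is $c>0$, independent of $i$, such that at least half of the pairs $(x,y)$ satisfy $d_{H_i}(x,y)\ge c\log|H_i|$; for such pairs $\|f(x)-f(y)\|\ge\rho(c\log|H_i|)$, whence the left-hand side is at least $\tfrac12\rho(c\log|H_i|)^2$, which tends to infinity with $|H_i|$. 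This contradiction shows that no \coarse\ imbedding of $(H_i)$ into $\mathcal X$ exists.

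The main obstacle is, unsurprisingly, Lafforgue's strengthened property~(T) itself — and within it the representation-theoretic core: the uniform decay of matrix coefficients of uniformly bounded representations of $\mathrm{SL}_3$ over a local field on spaces of type $>1$, with a decay rate controlled only by $n$ and $\epsilon$. Everything downstream of that is soft, and it is precisely the uniformity of $\kappa$ in the convexity parameters that later yields the strengthening recorded in Remark~\ref{rem:switchquantifiers}.
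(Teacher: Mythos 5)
Your proposal is correct and follows essentially the same route as the paper: you take Lafforgue's family $\mathrm{SL}_3(\mathbb{F}_q[t])$ with its finite quotients, invoke the uniform $\mathcal X$-valued Poincar\'e inequality for B-convex targets (Lafforgue's strengthened property (T), \cite{lafforgue:Tbanachique}), and conclude by the classical expander obstruction. The paper's proof is a condensed version of the same argument: it directly quotes the bounded-second-moment form of the Poincar\'e inequality (for every $1$-Lipschitz, mean-zero $\Phi\colon H_i\to\mathcal X$ one has $\mathbb E_x\|\Phi(x)\|^2\le C$) and then extracts a pair of points at distance $\ge c\log\#H_i$ with images at distance $\le\sqrt{2C}$, which is precisely the form needed downstream in Theorem~\ref{thm:Edistortion}. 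One small imprecision in your write-up: the uniform Poincar\'e inequality with constant $\kappa$ depending only on $(n,\epsilon)$ does not follow from the qualitative fixed-point theorem by ``the standard averaging argument''; it is the quantitative spectral-gap form of strengthened (T) that Lafforgue proves directly (decay of matrix coefficients with rate controlled by the convexity parameters), and the fixed-point property is a corollary of that, not the source. You in fact identify the representation-theoretic core correctly in your last paragraph, so this is a phrasing issue rather than a gap.
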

\begin{proof}
  What Lafforgue shows, actually, is that there exists a constant $C$
  such that, for every $i\in\N$ and every $1$-Lipschitz map
  $\Phi\colon H_i\to\mathcal X$ with $0$ mean, one has $\mathbb
  E_{x\in H_i}\|\Phi(x)\|^2\le C$. A classical argument (see, e.g.,
  \cite{lafforgue:T}*{page~600}) implies that there are two points
  $x,y\in H_i$ with $d(x,y)\ge c\log(\#H_i)$ and
  $d(\Phi(x),\Phi(y))\le\sqrt{2C}$, for a constant $c$ depending only
  on the number of generators of $H$.
\end{proof}

Here is a concrete example: consider a prime power $q$, the group
$H=\mathbf{SL}_3(\mathbb F_q[t])$, and its images $H_i$ in
$\mathbf{SL}_3(\mathbb F_q[t]/(t^i))$. In this situation, we have a
few extra, useful properties, which we quote as a
\begin{lemma}
  Additionally, in Proposition~\ref{prop:lafforgue}, the group $H$ may
  be supposed to be perfect.
\end{lemma}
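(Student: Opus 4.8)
The plan is to revisit Lafforgue's construction $H=\mathbf{SL}_3(\mathbb F_q[t])$ with quotients $H_i=\mathbf{SL}_3(\mathbb F_q[t]/(t^i))$ and observe that these groups are already essentially perfect, so no surgery is needed. First I would recall that for a commutative ring $R$, the group $\mathbf{SL}_3(R)$ is generated by its elementary matrices $e_{jk}(r)$ ($j\neq k$, $r\in R$), and that the Steinberg relations give $[e_{jk}(r),e_{k\ell}(s)]=e_{j\ell}(rs)$ whenever $j,k,\ell$ are pairwise distinct. Since the index set $\{1,2,3\}$ has three elements, for any pair $j\neq\ell$ we may pick the third index $k$ and write $e_{j\ell}(r)=[e_{jk}(r),e_{k\ell}(1)]$, exhibiting every elementary generator as a commutator. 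Hence $\mathbf{E}_3(R)=[\mathbf{E}_3(R),\mathbf{E}_3(R)]$, i.e.\ the elementary subgroup is perfect. For $R=\mathbb F_q[t]$, a theorem of Suslin (or the classical fact that $\mathbf{SL}_n=\mathbf{E}_n$ over a Euclidean domain, valid here since $\mathbb F_q[t]$ is Euclidean) gives $\mathbf{SL}_3(\mathbb F_q[t])=\mathbf{E}_3(\mathbb F_q[t])$, so $H$ itself is perfect; and since $\mathbb F_q[t]/(t^i)$ is a local ring, $\mathbf{SL}_3(\mathbb F_q[t]/(t^i))=\mathbf{E}_3(\mathbb F_q[t]/(t^i))$ is perfect as well, and these are precisely the quotients $H_i$.

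Next I would address whether replacing $H$ by a perfect group disturbs anything in Proposition~\ref{prop:lafforgue}: it does not, because $H$ is already generated by its finitely many elementary generators $e_{jk}(1),e_{jk}(t)$ (using $t$ and $1$ generate $\mathbb F_q[t]$ as an algebra and the Steinberg relations), and the graphs $H_i$ together with the constant $C$ and the estimate $\mathbb E_{x\in H_i}\|\Phi(x)\|^2\le C$ are exactly the objects Lafforgue produces. So the only content of the lemma is the purely algebraic remark above.

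The step I expect to require the most care is the citation that $\mathbf{SL}_3(\mathbb F_q[t])$ coincides with its elementary subgroup; over a field of functions in one variable this is a classical consequence of the Euclidean algorithm on $\mathbb F_q[t]$ (division with remainder lets one clear columns), so the argument is elementary, but one should be precise that it is $\mathbf{SL}_3$, not $\mathbf{SL}_2$, where pathologies would arise — for $n\ge3$ the elementary subgroup is always normal and the stable computations go through, and over a PID one has equality. Once that is in hand, perfectness of $H$ and of each $H_i$ follows from the three-index commutator identity, and the lemma is proved with essentially no new estimates.
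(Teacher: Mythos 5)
Your argument is correct, and it takes a cleaner route than the paper's. The paper also starts from $\mathbf{SL}_3(\mathbb F_q[t])=\mathbf E_3(\mathbb F_q[t])$ (Euclidean domain), but then uses the additive and multiplicative Steinberg relations to cut the generating set down to $A=\mathbf{SL}_3(\mathbb F_q)\cup\{X_{1,2}(t)\}$, deducing perfectness of $H$ from the perfectness of $A$ together with an analysis of the $A$-action on $B^A=\{1+tM:\operatorname{tr}M=0\}$, which has no $A$-invariant element. You instead observe directly that the three-index Steinberg identity $e_{j\ell}(r)=[e_{jk}(r),e_{k\ell}(1)]$ exhibits every elementary generator as a commutator, so $\mathbf E_3(R)$ is perfect on the nose; this avoids the module computation and is strictly shorter. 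Your discussion of $H_i$ being perfect because $\mathbb F_q[t]/(t^i)$ is local is redundant (the $H_i$ are quotients of the perfect group $H$, hence perfect automatically, a fact the paper uses in Lemma~\ref{lem:H1H2H3}), but not wrong.

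One small inaccuracy worth flagging: the elementary matrices $e_{jk}(1),e_{jk}(t)$ do \emph{not} generate $\mathbf{SL}_3(\mathbb F_q[t])$ for general prime power $q$. Additivity and the multiplicative Steinberg relation starting from scalars $1$ and $t$ only reach coefficients in $\mathbb F_p[t]\subset\mathbb F_q[t]$; for $q=p^r$ with $r>1$ one must adjoin $e_{jk}(\alpha)$ for $\alpha$ ranging over an $\mathbb F_p$-basis (or a primitive element) of $\mathbb F_q$. This still gives a finite generating set, so the conclusion of the lemma is unaffected, but the explicit list you give is only complete when $q$ is prime. The paper sidesteps this by simply taking $T=A\cup B$ with $A=\mathbf{SL}_3(\mathbb F_q)$, which is manifestly finite.
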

\begin{proof}
  Since $\mathbb F_q[t]$ is a Euclidean domain, $H$ is generated by
  elementary matrices. Furthermore, the classical identities
  $X_{i,j}(P+Q)= X_{i,j}(P)X_{i,j}(Q)$ and
  $X_{i,j}(PQ)=[X_{i,k}(P),X_{k,j}(Q)]$ between elementary matrices,
  when $\{i,j,k\}=\{1,2,3\}$, imply that $H$ is generated by
  $A=\mathbf{SL}_3(\mathbb F_q)$ and $B=\langle
  X_{1,2}(t)\rangle$. Since $A$ is perfect and $B^A=\{1+tM\colon M\in
  M_3(\mathbb F_q)\text{ and }\operatorname{tr}(M)=0\}$ has no $A$-invariant
  element, $H$ is also perfect.
\end{proof}
We fix as generating set $T=A\cup B$, and denote by $T_i$ its natural
image in $H_i$.

\begin{lemma}\label{lem:H1H2H3}
  Let $H$ be a finitely generated perfect group, and let
  $(H_i)_{i\in\N}$ be a family of quotients of $H$. Then the groups
  $H_i$ are perfect, all generated by the same number of elements, and
  the identity map $(H_i,d_{\text{perfect}})\to(H_i,d_{H_i})$ is
  $(J^{-1},1)$-bi-Lipschitz for a constant $J\ge1$ independent of $i$.
\end{lemma}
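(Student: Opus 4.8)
The statement has three assertions about a family of quotients $H_i$ of a finitely generated perfect group $H=\langle T\rangle$: that each $H_i$ is perfect, that they are uniformly finitely generated, and that the perfect metric and the word metric on $H_i$ are bi-Lipschitz with a uniform constant $J$. The first two are immediate: a quotient of a perfect group is perfect, and the natural image $T_i$ of $T$ generates $H_i$ with $|T_i|\le|T|$, so the number of generators is bounded independently of $i$; if one wishes all $H_i$ literally $|T|$-generated one may pad $T_i$ with copies of the identity. The only real content is the two-sided Lipschitz comparison of $d_{\text{perfect}}$ and $d_{H_i}$. One inequality is trivial and holds on the nose with constant $1$: every balanced word representing $g\in[H_i,H_i]=H_i$ is in particular a word representing $g$, so $\|g\|_{H_i}\le\|g\|_{\text{perfect}}$, i.e. $d_{H_i}\le d_{\text{perfect}}$. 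This gives the ``$1$'' in $(J^{-1},1)$.

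The crux is the reverse inequality $\|g\|_{\text{perfect}}\le J\,\|g\|_{H_i}$ for a constant $J$ independent of both $g$ and $i$. The idea is to work once and for all in the ambient group $H$, and to exhibit for each generator $s\in T$ a fixed balanced word $u_s\in[F_T,F_T]$ of bounded length whose image in $H$ equals $s$; such words exist because $H$ is perfect and finitely generated, so each $s$ lies in $[H,H]$ and hence is a product of boundedly many commutators of generators (take $J_0$ to be the maximum over $s\in T$ of the length of a chosen $u_s$). Now, given $g\in H_i$ with $\|g\|_{H_i}=n$, lift a geodesic word $w=s_1\cdots s_n$ in $F_T$ representing $g$ in $H_i$, and replace each letter $s_k$ by the corresponding balanced word $u_{s_k}$. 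The resulting word $w'=u_{s_1}\cdots u_{s_n}$ is balanced (a concatenation of balanced words), has length at most $J_0 n$, and maps in $H$ — hence in $H_i$ — to the same element as $w$, namely $g$. Therefore $\|g\|_{\text{perfect}}\le J_0\|g\|_{H_i}$, and $J=J_0$ works uniformly in $i$ since the words $u_s$ were chosen in $F_T$ before passing to any quotient.

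Putting the two inequalities together gives $J^{-1}\|g\|_{H_i}\le\|g\|_{\text{perfect}}\le\|g\|_{H_i}$, hence $J^{-1}d_{H_i}\le d_{\text{perfect}}\le d_{H_i}$ on $H_i=[H_i,H_i]$, which is exactly the claimed $(J^{-1},1)$-bi-Lipschitz equivalence. I expect no genuine obstacle here: the only point requiring a moment's care is the observation that the comparison constant is inherited from $H$ and is therefore uniform — this is precisely why one fixes the balanced words $u_s$ upstairs in the free group rather than choosing them separately in each $H_i$. A minor bookkeeping remark: one should note that in $H_i$ the derived subgroup is all of $H_i$ (since $H_i$ is perfect), so that $d_{\text{perfect}}$ is genuinely defined on all of $H_i$ and the bi-Lipschitz statement is about the full Cayley graph, not merely a subgroup.
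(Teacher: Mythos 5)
Your proof is correct and takes essentially the same route as the paper: fix for each $t\in T$ a balanced word in $F_T$ representing it (possible since $H$ is perfect and $T$ finite), let $J$ be the maximal length of these, and substitute them into a geodesic word in $T_i$ for $g$ to get a balanced word of length $\le J\|g\|_{H_i}$. One small transcription slip at the end: the chain you wrote, ``$J^{-1}\|g\|_{H_i}\le\|g\|_{\text{perfect}}\le\|g\|_{H_i}$,'' reverses the two norms; what your argument actually establishes is $\|g\|_{H_i}\le\|g\|_{\text{perfect}}\le J\|g\|_{H_i}$, which is precisely what makes the identity $(H_i,d_{\text{perfect}})\to(H_i,d_{H_i})$ a $(J^{-1},1)$-bi-Lipschitz map.
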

\begin{proof}
  The abelianisations of the $H_i$ are quotients of the abelianisation
  of $H$, and therefore are trivial.  If $H=\langle T\rangle$ be
  $d$-generated, then the groups $H_i$ are naturally $d$-generated by
  the images $T_i$ of $T$.

  Represent now each $t\in T$ as a balanced word, and let $J$ be the
  maximal length of these balanced words. Then
  \[\|g\|_{T_i}\le \|g\|_{\text{perfect}}\le J\|g\|_T\le
  J\|g\|_{T_i}\text{ for all }g\in [H_i,H_i],
  \]
  so the inclusion $([H_i,H_i],d_{\text{perfect}})\to(H_i,d_{T_i})$ is
  $(J^{-1},1)$-bi-Lipschitz.
\end{proof}

\begin{corollary}\label{cor:distortion}
  Let $\mathcal X$ be a B-convex Banach space, and let $\rho$ be an
  unbounded increasing function $\R_+\to\R_+$. Then there exists a
  finitely generated group $W$ of subexponential growth such that
  every imbedding of $W$ in $\mathcal X$ has distortion worse than
  $\rho$.

  In particular, let $\rho$ be any unbounded increasing function
  $\R_+\to\R_+$. Then there exists a finitely generated group $W$ of
  subexponential growth such that every imbedding of $W$ into Hilbert
  space has distortion worse than $\rho$.
\end{corollary}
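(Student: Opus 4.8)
The plan is to combine Lafforgue's superexpanders (Proposition~\ref{prop:lafforgue}) with Theorem~\ref{thm:Edistortion}. Fix a B-convex Banach space $\mathcal X$, with convexity parameters $n\in\N$ and $\epsilon>0$, and an unbounded increasing function $\rho$. I would take for $(H_i)_{i\in\N}$ the sequence of finite quotients of a finitely generated group $H$ supplied by Proposition~\ref{prop:lafforgue}, choosing for $H$ the perfect group $\mathbf{SL}_3(\mathbb F_q[t])$ exhibited above; then Lemma~\ref{lem:H1H2H3} shows that the $H_i$ are perfect and all generated by a fixed number $d$ of elements, so that $(H_i)$ is in particular a sequence of $d$-generated finite groups, ready to be fed into Theorem~\ref{thm:Edistortion}. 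Perfectness is not actually needed for the distortion conclusion; it only serves, as in the main theorem, to let one pass the $H_i$ directly through Proposition~\ref{prop:W_S} and thereby arrange that $W$ genuinely contains infinitely many of the $H_i$.

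The one point requiring work is to verify that $(H_i)$, with its word metric, does not imbed \coarse ly into $\mathcal X$ in the precise sense of Definition~\ref{defn:coarseimbed}. Given any sequence of $1$-Lipschitz imbeddings $\Phi_i\colon H_i\to\mathcal X$, I would first replace each $\Phi_i$ by $\Phi_i-\mathbb E_{x\in H_i}\Phi_i(x)$, a translation of $\mathcal X$ and hence an isometry that changes no distance, so as to assume each $\Phi_i$ has zero mean; Lafforgue's estimate, recalled inside the proof of Proposition~\ref{prop:lafforgue}, then gives a constant $C$ depending only on $n,\epsilon$ (and $d$) with $\mathbb E_{x\in H_i}\|\Phi_i(x)\|^2\le C$ for all $i$. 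The averaging argument of that proof --- comparing the number of ordered pairs of vertices of $H_i$ at distance $<c\log\#H_i$, which for $c$ small enough in terms of $d$ is at most a fixed fraction of all ordered pairs (using that a radius-$r$ ball in the Cayley graph of $H_i$ has at most $(2d+1)^r$ vertices), with the Markov bound coming from $\mathbb E\|\Phi_i(x)\|^2\le C$ --- produces $x,y\in H_i$ with $d(x,y)\ge c\log\#H_i$ and $d(\Phi_i(x),\Phi_i(y))\le M$ for a constant $M$ depending only on $d$ and $n,\epsilon$. Since the $H_i$ form an expander family, $\#H_i\to\infty$, so given $t\in\R_+$ one chooses $i$ with $c\log\#H_i\ge t$; this is exactly the condition of Definition~\ref{defn:coarseimbed}, with constant $M$.

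With this in hand, Theorem~\ref{thm:Edistortion} applies verbatim to $(H_i)$ and $\rho$ and yields a finitely generated group $W$ of subexponential growth all of whose imbeddings into $\mathcal X$ have distortion worse than $\rho$; this proves the first assertion. The second then follows because Hilbert space is B-convex, with $n=2$ and any $\epsilon>\sqrt2-1$. The main obstacle, such as it is, lies entirely in the second paragraph: one has to convert Lafforgue's averaged bound on $\|\Phi_i(x)\|^2$ into the existence of a \emph{single} pair that is simultaneously far apart in the Cayley graph of $H_i$ and close in $\mathcal X$, and to check that the distance lower bound $c\log\#H_i$ really does tend to infinity so that the ``for all $t$'' in Definition~\ref{defn:coarseimbed} is met --- both handled by the elementary counting above, everything else being a direct citation. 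Finally, since the $H_i$ do not depend on $\mathcal X$ at all (only $C$, hence $M$, does, through $n,\epsilon$), tracking these constants through the proof of Theorem~\ref{thm:Edistortion} would give the sharpening of Remark~\ref{rem:switchquantifiers}: a single $W$, depending only on $\rho$, serves all B-convex spaces with prescribed convexity parameters, at the price of replacing $\rho$ by $c\rho$ with $c=c(n,\epsilon)$.
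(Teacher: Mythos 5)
Your proposal is correct and follows essentially the same route as the paper: invoke Lafforgue's superexpanders from Proposition~\ref{prop:lafforgue}, note via Lemma~\ref{lem:H1H2H3} (or perfectness) that the preprocessing of Proposition~\ref{prop:imbed[G,G]} can be skipped, verify the non-coarse-imbeddability hypothesis of Definition~\ref{defn:coarseimbed}, and then apply Theorem~\ref{thm:Edistortion}. You merely expand the ``classical argument'' that the paper cites at \cite{lafforgue:T}*{page~600} --- translating to zero mean, bounding the fraction of pairs at distance below $c\log\#H_i$ by a volume count, and applying Markov --- whereas the paper treats this as a black box inside the proof of Proposition~\ref{prop:lafforgue}.
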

\begin{proof}
  Consider the sequence of superexpanders $(H_i)_{i\in\N}$ given by
  Proposition~\ref{prop:lafforgue}. Either imbed them as derived
  subgroups in finite groups, using Proposition~\ref{prop:imbed[G,G]},
  or note that that step is unnecessary, thanks to
  Lemma~\ref{lem:H1H2H3}.

  By Proposition~\ref{prop:lafforgue}, there exists a constant $M$
  such that if $(\Phi_i\colon H_i\to \mathcal X)$ is a sequence of
  $1$-Lipschitz imbeddings into $\mathcal X$ then for every $t\in\R$
  and for all $j$ large enough (depending on $t$) there exist $x,y\in
  H_j$ with $d(x,y)\ge t$ and $d_{\mathcal X}(\Phi_j(x),\Phi_j(y))\le
  M$.

  Theorem~\ref{thm:Edistortion} then applies.
\end{proof}

\begin{remark}\label{rem:switchquantifiers}
  The order of quantifiers can be switched in
  Corollary~\ref{cor:distortion}: fixing $n\in\N$ and $\epsilon>0$,
  there exists for every unbounded increasing function $\R_+\to\R_+$ a
  group $W$ of subexponential growth with the following property: if
  $\mathcal X$ is any B-convex Banach space not
  $(1+\epsilon)$-isometrically containing $\ell_1(\mathbb C^n)$, then
  every imbedding of $W$ in $\mathcal X$ has distortion worse than
  $\rho$. This follows formally from Corollary~\ref{cor:distortion}
  because an $\ell^2$-sum of such Banach spaces is again of the same
  form.
\end{remark}


\begin{bibdiv}
\begin{biblist}
\font\cyreight=wncyr8
\bibselect{math}
\end{biblist}
\end{bibdiv}

\end{document}